\theoremstyle{plain}
\newtheorem{theorem}{Theorem}[section]
\newtheorem{proposition}[theorem]{Proposition}
\theoremstyle{assumption}
\newtheorem{assumption}[theorem]{Assumption}
\newtheorem{definition}[theorem]{Definition}
\theoremstyle{remark}
\newtheorem{remark}[theorem]{Remark}
\numberwithin{equation}{section}
\begin{document}

\title[Fredholm theory]
    {Fredholm theory connected with a Douglis-Nirenberg system
of differential equations over $\mathbb{R}^n$}

\author{M. Faierman}
\address{School of Mathematics and Statistics, The University of New South Wales,
 UNSW Sydney, NSW 2052, Australia}
\email{m.faierman@unsw.edu.au}

\subjclass{35J45, 47A53}
\date{27/04/2016}
\keywords{Parameter-ellipticity, Douglis-Nirenberg system,
Fredholm properties.}

\begin{abstract}
We consider a spectral problem over  $\mathbb{R}^n$ for a
Douglis-Nirenberg system of differential operators under limited
smoothness assumptions  and under the assumption of
parameter-ellipticity in a closed sector $\mathcal{L}$ in the
complex plane with vertex at the origin. We pose the problem in an
$L_p$ Sobolev-Bessel potential space setting, $1 < p < \infty$,
and denote by $A_p$ the operator induced in this setting by the
spectral problem.
 We then derive results pertaining to the Fredholm theory for $A_p$ for values of the spectral parameter
$\lambda$ lying in $\mathcal{L}$ as well as results  pertaining to
the invariance of the Fredholm domain of $A_p$ with $p$.
\end{abstract}

\maketitle

\section{Introduction} \label{S:1}
The Fredholm properties  of elliptic pseudodifferential operators,
as well as systems of such operators, defined over $\mathbb{R}^n$
has been the subject of investigation by various authors over the
past few decades. We refer for example to \cite{F}, \cite{LMc},
\cite{MM},  \cite{R}, and to the references listed therein for
further details. Of particular interest to us are the works
dealing with Douglis-Nirenberg systems (cf. \cite{F}, \cite{MM},
[18], \cite{R}, and  \cite{ZM}) as well as those dealing with
parameter-elliptic operators (cf. \cite{MM}), since, as far as we
are aware from an inspection of the literature, there are no works
dealing with parameter-elliptic Douglis-Nirenberg systems over
$\mathbb{R}^n$ in the classical $L_p$ Sobolev-Bessel potential
space setting, $1 < p < \infty$, when the diagonal operators are
not all of the same order.

In light of what has just been said, let us mention at this point
the paper \cite{Ko} of Kozhevnikov wherein a Douglis-Nirenberg
system of pseudodifferential operators acting over a compact
manifold without boundary is considered. By posing the problem in
a classical setting (as mentioned above) and by introducing the
so-called Kohzevnikov conditions, the author was able to establish
a priori estimates  for solutions as well as various spectral
results. Problems similar to those considered in \cite{Ko} were
also investigated by Denk, Mennicken, and Volevich \cite{DMV}, and
by introducing conditions, which by the use of the Newton polygon
they show to be equivalent to those of Kozhevnikov, they also
establish a priori estimates for solutions as well as various
spectral results.

Motivated by what has been said above, the object of this paper is to derive information concerning the Fredholm properties
of the operator $A_p$ induced in a $L_p$ Sobolev-Bessel potential space setting, $1 < p < \infty$, by a spectral problem over
$\mathbb{R}^n$ for a Douglis-Nirenberg system of differential operators
under limited smoothness assumptions as well as under the assumption of parameter-ellipticity in a closed sector
 $\mathcal{L}$ of the complex plane with vertex at the origin.
And fundamental to our work will be the uniform version of the
Kozhevnikov conditions given in Definition~\ref{D:2.3} below which
will enable us to derive information pertaining to that part of
the Fredholm domain of $A_p$ lying in $\mathcal{L}$.

Turning  to the problem under consideration here, let $N \in
\mathbb{N}$ with $N > 1$ and let $\{s_j\}_1^N$ and $\{t_j\}_1^N$
denote sequences of integers satisfying $s_1 \geq s_2 \geq \cdots
\geq s_N \geq 0$, $t_1 \geq t_2 \geq\cdots \geq t_N \geq 0$, and
put $m_j = s_j + t_j$ for $j = 1,\ldots ,N$. We suppose that $m_1
= m_2=\cdots =m_{k_1}
> m_{k_1+1}=\cdots = m_{k_{d-1}} > m_{k_{d-1}+1} =\cdots = m_{k_d} > 0$, where $k_d = N$,
 and let $\tilde{I}_r$ denote the $(k_r -
k_{r-1})\times (k_r -k_{r-1})$ identity matrix for $r = 1,\ldots
,d$, where $ k_0 = 0$. We will also  use the notation $I_{\ell}$
to denote the $\ell \times \ell$ unit matrix for $\ell \in
\mathbb{N}$.
 Then we shall  be concerned here with the spectral problem
\begin{equation} \label{E:1.1}
A(x,D)u(x) - \lambda u(x)  = f(x) \quad \textrm{in}\quad
\mathbb{R}^n,
\end{equation}
where  $\,u(x) = (u_1(x),\ldots,$ $u_N(x))^T$,
 \;and\; $f(x)= \left(f_1(x),\ldots,f_N(x)\right)^T$\; are\; $N \times 1$ \;matrix functions defined in $\mathbb{R}^n, \;^T$\, denotes transpose,
$A(x,D)$\; is an\; $N \times N$\; matrix operator whose entries\;
$A_{jk}(x,D)$\; are linear differential operators defined on
$\mathbb{R}^n$ of order not exceeding $s_j+t_k$ and defined to be
$0$ if $s_j +t_k <0$. Our assumptions concerning the spectral
problem (1.1) will be made precise in Section~\ref{S:2}.

In Section~\ref{S:2} we make precise the concept of
parameter-ellipticity for the  spectral problem \eqref{E:1.1} and
the conditions under which the problem is dealt with. In
Section~\ref{S:3} we pose the spectral problem in an $L_p$
Sobolev-Bessel potential space setting, $1 < p < \infty$, and
obtain estimates for solutions for various values of the spectral
parameter. Finally in Section~\ref{S:4} we we fix our attention
upon the operator $A_p$ induced in the $L_p$ Sobolev-Bessel
potential space setting just cited by the spectral problem
\eqref{E:1.1}, and using results from Section~\ref{S:3}, we derive
information concerning the Fredholm properties of $A_p$ for
various values
 of the spectral parameter lying in $\mathcal{L}$.


\section{Preliminaries} \label{S:2}
In this section we are going to introduce some terminology,
definitions, and assumptions concerning the spectral problem
(1.1), which we require for our work.

Accordingly, we let $x = (x_1, \ldots, x_n) = (x^{\prime}, x_n)$
denote a generic point in $\mathbb{R}^n$ and use the notation $D_j
= -i\partial/\partial x_j$, $D = (D_1, \ldots , D_n)$, $D^{\alpha}
= D_1^{\alpha_1} \cdots D_n^{\alpha_n}
 = D^{\prime\,\alpha^{\prime}}D_n^{\alpha_n}$, and
$\xi^{\alpha} = \xi_1^{\alpha_1}\ldots \xi_n^{\alpha_n}$ for $\xi
= (\xi_1, \ldots, \xi_n)$ = $(\xi^{\prime}, \xi_n) \in
\mathbb{R}^n$, where $\alpha = (\alpha_1, \ldots, \alpha_n) =
(\alpha^{\prime},\alpha_n)$ is a multi-index whose length $\sum_{j
= 1}^n \alpha_j$ is denoted by $|\alpha|$. Differentiation with
respect to another variable, say $y \in \mathbb{R}^n$, instead of
$x$ will be indicated by replacing $D$ and $D^{\alpha}$ by $D_y$
and $D_y^{\alpha}$, respectively.
 For $1 < p < \infty$, $s \in \mathbb{N}_0 = \mathbb{N}\cup \{\,0 \,\}$,
and $G$ an open set in $\mathbb{R}^n$,  we let $W_p^s(G)$
denote the Sobolev space of order $s$ related to $L_p(G)$ and denote the norm in this
space by $\| \cdot \|_{s,p,G}$, where $\|u\|_{s,p,G} = \left(\sum_{|\alpha| \leq s}
\int_G |D^{\alpha}u(x)|^p \, dx \right)^{1/p}$ for $u \in W^s_p(G)$.
In addition we
shall use norms depending upon a parameter $\lambda \in \mathbb{C} \backslash \{\,0\,
\}$, namely for $1 \leq j \leq N$,\; we let
\[
||| u||| ^{(j)}_{s,p,G} = \|u\|_{s,p,G}
+|\lambda|^{s/m_j}\|u\|_{0,p,G}\quad\textrm{for} \quad u \in
W^s_p(G).
\]

In the sequel we shall  at times deal with the Bessel-potential
space $H^s_p(G)$ for  $0 \geq s \in \mathbb{Z}$ and equipped with
either its ordinary norm $\Vert\,\cdot\,\Vert_{s,p,G}$ or its
parameter dependent norm $|||\,\cdot\,|||^{(j)}_{s,p,G}, 1 \leq j
\leq N$. Here, for $u \in H^s_p(G)$, $\Vert\,u\,\Vert_{s,p,G} =
\Vert\,F^{-1}\langle\,\xi\,\rangle^sFu\,\Vert_{0,p,\mathbb{R}^n}$
(resp. $||| u|||^{(j)}_{s,p,G} =
\Vert\,F^{-1}\langle\xi,\lambda\rangle^s_jFu\|_{0,p,\mathbb{R}^n}$)
if $G=\mathbb{R}^n$, while $\Vert\,u\,\Vert_{s,p,G} =$
inf\,$\Vert\,v\,\Vert_{s,p,\mathbb{R}^n}$ (resp. $|||
u|||^{(j)}_{s,p,G} =
\textrm{inf}\,|||v|||^{(j)}_{s,p,\mathbb{R}^n}$) otherwise, where
the infimum is taken over all $v \in H^s_p(\mathbb{R}^n)$ for
which  $u = v_{\bigl|G}$, $F$ denotes the Fourier transformation
in $\mathbb{R}^n (x \to \xi)$, $\langle\,\xi\,\rangle = (1 +
|\xi|^2)^{1/2}$, $\lambda \in \mathbb{C} \setminus \{\,0\,\}$, and
$\langle\xi,\lambda\rangle_j =
\left(|\xi|^2+|\lambda|^{\frac{2}{m_j}}\right)^{\frac{1}{2}}$ (see
\cite[Section~1]{GK}, \cite[p.~177]{T}). Analogous definitions
also hold for $s > 0$. However
 when $s \geq 0$, then $W^s_p(G)$ and $H^s_p(G)$ coincide algebraically
and their norms, both ordinary and parameter  dependent, are equivalent.
Hence in the sequel, when dealing with the space $H^s_p(G)$ for $s \geq  0$, we shall  suppose that it is equipped
with either the ordinary or parameter dependent norm of $W^s_p(G)$.
Lastly, let
   $\mathbb{R}_+ = \{\,t \in \mathbb{R} | t > 0\,\}$, $\mathbb{R}_- = \{\,t \in \mathbb{R}\,\bigl|\,t < 0\,\}$.

Next for $\ell \in \mathbb{N}_0$,
 we will use the usual notation $C^{\ell}(\mathbb{R}^n)$ to denote the vector space consisting of all those functions $\phi$
which, together with their partial derivatives of order up to $\ell$, are   continuous on $\mathbb{R}^n$.
 In addition we let $C^{\ell}(\overline{\mathbb{R}^n})$ denote the subspace
of $C^{\ell}(\mathbb{R}^n)$ consisting of all those functions $\phi \in C^{\ell}(\mathbb{R}^n)$ for which $\phi$  as well as its partial derivative
of order up to $\ell$ are bounded and uniformly continuous on $\mathbb{R}^n$. Lastly for $\ell \in \mathbb{N}$ we let $C^{\ell,0}(\overline{\mathbb{R}^n})$
denote the subspace of $C^{\ell}(\overline{\mathbb{R}^n})$ for which $|D^{\alpha}\phi(x)| \to 0$ as $|x| \to \infty$ for $1 \leq |\alpha| \leq \ell$,
while for $ \ell = 0$ we let $C^{0,0}(\overline{\mathbb{R}^n})$ denote the subspace of $C^0(\overline{\mathbb{R}^n})$ consisting of those functions $\phi$
for which $\omega_{\phi}(x) \to 0$ as $|x| \to \infty$, where $\omega_{\phi}(x) =$ sup\,$|\phi(x) - \phi(y)|$ for $x,y \in \mathbb{R}^n$, and
where for each $x$ the supremum is taken over those values of $y$ for which $|x-y| \leq 1$.
Note that for $\ell \geq  1, C^{\ell,0}(\overline{\mathbb{R}^n}) \subset C^{0,0}(\overline{\mathbb{R}^n})$.

Turning now to the spectral problem (1.1), (1.2), let us write
\begin{equation*}  A_{jk}(x,D) = \sum_{|\alpha| \leq s_j+t_k}a^{jk}_{\alpha}(x)D^{\alpha} \quad \textrm{for}
 \quad x \in  \mathbb{R}^n \quad
\textrm{and} \quad 1 \leq j,k \leq N.
\end{equation*}
Then as pointed out in \cite{ADN} there is no loss of generality in making the following assumptions.
\begin{assumption} \label{A:2.1}
\rm{It will henceforth be supposed that $t_j > 0$ and $s_j \geq 0$ for $ j = 1,
 \ldots,N$. }
\end{assumption}
\begin{definition} \label{D:2.2}
\rm{We say that the spectral problem \eqref{E:1.1} is minimally smooth if
    for each pair $j,k$,
  $a_{\alpha}^{jk} \in C^{s_j}(\overline{\mathbb{R}^n})$ \; for
  $|\alpha| \leq s_j+t_k$ if $s_j> 0$, while if $s_j = 0$, then
  $a_{\alpha}^{jk} \in L_{\infty}(\mathbb{R}^n)$ for $|\alpha| < t_k$ and
  $a_{\alpha}^{jk} \in C^0(\overline{\mathbb{R}^n})$ for $|\alpha| = t_k$.}
   \end{definition}

For $x,\xi \in \mathbb{R}^n$   let
\[
\mathring{{A}}(x,\xi) = \left(\mathring{{A}}_{jk}(x,\xi)\right)_{j,k=1}^N,
\]
where $\mathring{{A}}_{jk}(x,\xi)$
  consists of those terms in
 $A_{jk}(x,\xi)$  which are just of order $s_j+t_k$.
  Then
 in the sequel we shall also require the following notation. For
 $x,\xi \in \mathbb{R}^n$  let
  \[
 \mathcal{A}^{(r)}_{11}(x,\xi) = \left(\mathring{{A}}_{jk}(x,\xi)\right)_{j,k=1}^{k_r}
 \quad \mathrm{for}\quad 1 \leq r \leq d.
 \]
  In addition we let  $\tilde{I}_{r,0 }=$ diag$(0\cdot\tilde{I}_1,\ldots,0\cdot \tilde{I}_{r-1},\tilde{I}_r)
  \
\textrm{for} \ r=2,\ldots,d$ and $\tilde{I}_{1,0} = \tilde{I}_1$.

  \begin{definition} \label{D:2.3}
 \rm{Let $\mathcal{L}$ be a closed sector in the complex plane with vertex at the origin.
  Then the spectral problem \eqref{E:1.1}
  will be called parameter-elliptic in $\mathcal{L}$
  if it is minimally smooth  and for each $r, 1 \leq r \leq d$,
  \begin{equation} \label{E:2.1}
         \bigl|\, \textrm{det}\left(\mathcal{A}^{(r)}_{11}(x,\xi) - \lambda\,\tilde{I}_{r,0}\right)\bigr| \geq \kappa_r|\xi|^{N_{r-1}}
   \end{equation}
   for   $x$,
 $\xi \in \mathbb{R}^n$, and $\lambda \in \mathcal{L}$ with $\langle\,\xi,\lambda\,\rangle_{k_r} = 1$,
 where the $\kappa_r$ denote positive constants and $N_r = \sum_{j=1}^{k_r}m_j$ for $r \geq 1$ and $N_0 = 0$. }
     \end{definition}

     \begin{remark} \label{R:2.4}
\rm{It follows from the arguments of \cite[Proposition~2.2]{AV}
that when \eqref{E:2.1} holds, then $N_r$ is even if $r = 1$ and
if $r >1$ and $n > 2$.}
\end{remark}

\begin{definition} \label{D:2.5}
    \rm{We say that the spectral problem \eqref{E:1.1} is weakly smooth if it is minimally smooth and in addition
    $a^{jk}_{\alpha} \in C^{0,0}(\overline{\mathbb{R}^n})$ if  $|\alpha| \leq t_k$}
    and to $C^{|\alpha|-t_k,0}(\overline{R^n})$ otherwise, $1 \leq j,k  \leq     N$.
\end{definition}

\section{Some estimates} \label{S:3}
In this section we are going to establish some a priori estimates for solutions of the spectral problem \eqref{E:1.1},
which will be used in the sequel.

Accordingly, let us introduce some further notation. For $G$ an open set in $\mathbb{R}^n$ and $\tau = t$ or $s$
we let  $W^{(\tau)}_p(G) =
 \prod_{k=1}^NW^{\tau_k}_p(G)$ and equip $W^{(\tau)}_p(G)$ with either its ordinary norm,
 $\Vert\,u\,\Vert_{(\tau),p,G} = \sum_{k=1}^N\Vert\,u_k\,\Vert_{\tau_k,p,G}$
or its parameter dependent norm $|||u|||_{(\tau),p,G} = \sum_{k=1}^N|||u_k|||^{(k)}_{\tau_k,p,G}$ for
$u = \left(u_1,\ldots,u_N\right)^T \in W^{(\tau)}_p(G)$. The subspace
$\prod_{k=1}^N\textrm{\r{W}}^{\tau_k}_p(G)$ is denoted by $\textrm{\r{W}}^{(\tau)}_p(G)$. In addition we let $H^{(-s)}_p(G) = \prod_{k=1}^NH^{-s_k}_p(G)$
  and equip this space with either its ordinary norm
  $\Vert\cdot\Vert_{(-s),p,G}$    or its parameter dependent norm
$|||\cdot|||_{(-s),p,G}$, which are defined in an analogous
manner to the way they were defined for $W^{(\tau)}_p(G)$.

\begin{proposition} \label{P:3.1}
 Suppose that the spectral problem \eqref{E:1.1} is minimally smooth. Suppose also
 that $\lambda \in \mathbb{C}$ with $|\lambda| \geq \lambda^{\sharp}$ for some $\lambda^{\sharp} \in \mathbb{R}_+$, that
  $u \in W^{(t)}_p(\mathbb{R}^n)$, and that $f$ is defined by \eqref{E:1.1}.
 Then $f \in H^{(-s)}_p(\mathbb{R}^n)$ and $|||f|||_{(-s),p,\mathbb{R}^n}  \leq C|||u|||_{(t),p,\mathbb{R}^n}$,
 where the constant $C$ does not depend upon $u$ and $\lambda$.
\end{proposition}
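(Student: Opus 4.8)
The plan is to reduce the estimate to a pointwise (in $\lambda$) mapping estimate for each scalar entry $A_{jk}(x,D)$, acting as a map from $W^{t_k}_p(\mathbb{R}^n)$ into $H^{-s_j}_p(\mathbb{R}^n)$, with norm bounds uniform in $\lambda$. Since $f_j = \sum_{k=1}^N A_{jk}(x,D)u_k$, and the norms on $W^{(t)}_p$ and $H^{(-s)}_p$ are the obvious sums over components, it suffices to show that for each fixed pair $j,k$ one has
\[
|||A_{jk}(x,D)u_k|||^{(j)}_{-s_j,p,\mathbb{R}^n} \leq C\,|||u_k|||^{(k)}_{t_k,p,\mathbb{R}^n},
\]
with $C$ independent of $u$ and of $\lambda$ with $|\lambda|\geq\lambda^\sharp$. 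Summing over $j,k$ then gives the claim, and in particular shows $f\in H^{(-s)}_p(\mathbb{R}^n)$.

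First I would record the elementary fact that the parameter-dependent norms scale correctly. Writing $A_{jk}(x,D) = \sum_{|\alpha|\leq s_j+t_k} a^{jk}_\alpha(x) D^\alpha$, fix a term $a^{jk}_\alpha(x)D^\alpha$ with $|\alpha|\leq s_j+t_k$. I want to bound $\||a^{jk}_\alpha D^\alpha u_k|\|^{(j)}_{-s_j,p}$. The key observation is that $D^\alpha$ maps $H^{t_k}_p$ continuously into $H^{t_k-|\alpha|}_p$ with the parameter-dependent norms matching up to the weight $\langle\xi,\lambda\rangle_k$ versus $\langle\xi,\lambda\rangle_j$: since $m_j\le m_k$ when $j\le k$ and conversely, one compares $\langle\xi,\lambda\rangle_j$ and $\langle\xi,\lambda\rangle_k$ directly, and because $t_k - |\alpha| \geq t_k - s_j - t_k = -s_j$, the target order $-s_j$ is no larger than the order $t_k-|\alpha|$ produced by differentiation, so no loss occurs (lowering the order in a Bessel-potential scale with the parameter weight only helps, uniformly in $\lambda$ since $|\lambda|\geq\lambda^\sharp>0$ keeps the weights comparable to the ordinary $\langle\xi\rangle$ up to $\lambda$-powers). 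This reduces everything to multiplication by the coefficient $a^{jk}_\alpha$.

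The substantive step is therefore the multiplier estimate: multiplication by $a^{jk}_\alpha(x)$ is bounded on the relevant Bessel-potential spaces, uniformly in $\lambda$. When $s_j>0$ we have $a^{jk}_\alpha\in C^{s_j}(\overline{\mathbb{R}^n})$, and one needs boundedness of multiplication $C^{s_j}\cdot H^{\sigma}_p\to H^{-s_j}_p$ for the orders $\sigma\in[-s_j,\,t_k-|\alpha|]$ that arise; for $\sigma\geq 0$ this is the classical Leibniz/Sobolev multiplier result, while for negative $\sigma$ one passes to the adjoint, using that multiplication by a $C^{s_j}$ function is bounded on $H^{s_j}_{p'}$. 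When $s_j=0$ the coefficients are merely in $L_\infty$ (for $|\alpha|<t_k$) or in $C^0(\overline{\mathbb{R}^n})$ (for $|\alpha|=t_k$); here the target is $H^0_p=L_p$ and $L_\infty$-multiplication is trivially bounded on $L_p$, and for the top-order terms one uses that $D^\alpha u_k\in L_p$ already. Throughout, the uniformity in $\lambda$ comes from the fact that the parameter only enters through the weights $\langle\xi,\lambda\rangle_k^{\,\cdot}$, which commute with multiplication operators up to bounded Fourier multipliers with $\lambda$-uniform symbol bounds; the hypothesis $|\lambda|\geq\lambda^\sharp$ ensures these symbols stay in a bounded family.

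The main obstacle is the negative-order multiplier estimate under the limited smoothness of Definition~\ref{D:2.2}: one must verify that multiplication by a $C^{s_j}$ coefficient (not $C^\infty$) is bounded from $H^\sigma_p$ to $H^{-s_j}_p$ uniformly in $\lambda$ for $\sigma$ down to $-s_j$, which is the sharpest case and where the smoothness threshold is exactly exhausted. I would handle this by duality together with a commutator argument, reducing to the positive-order statement $C^{s_j}\cdot H^{|\sigma|}_{p'}\to H^{|\sigma|}_{p'}$ (for $0\le|\sigma|\le s_j$) and tracking that the parameter weights intervene only through multipliers bounded independently of $\lambda$; the remaining bookkeeping — summing the finitely many terms in $A_{jk}$ and the finitely many pairs $j,k$ — is routine.
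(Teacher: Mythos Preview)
Your decomposition $f_j = \sum_{k=1}^N A_{jk}(x,D)u_k$ omits the spectral term: by \eqref{E:1.1} one actually has $f_j = \sum_k A_{jk}(x,D)u_k - \lambda u_j$. This is not a cosmetic omission, because the term $-\lambda u_j$ is the \emph{only} place where the parameter-dependent norms are genuinely needed; for the $A_{jk}$ part an ordinary (parameter-free) mapping bound already suffices. The missing estimate is short --- using $s_j+t_j=m_j$ one has
\[
|||\,\lambda u_j\,|||^{(j)}_{-s_j,p,\mathbb{R}^n}\;\leq\; C\,|\lambda|^{1-s_j/m_j}\,\|u_j\|_{0,p,\mathbb{R}^n}\;=\;C\,|\lambda|^{t_j/m_j}\,\|u_j\|_{0,p,\mathbb{R}^n}\;\leq\;C\,|||u_j|||^{(j)}_{t_j,p,\mathbb{R}^n}
\]
--- but it must be supplied.

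Apart from this, your route through Fourier multipliers and pointwise-multiplier theorems on Bessel-potential spaces is workable but heavier than what the paper does. The paper argues by direct duality: for $\zeta\in C^\infty_0(\mathbb{R}^n)^N$ one writes out the pairing $\langle (A(x,D)-\lambda I_N)u,\zeta\rangle_{\mathbb{R}^n}$, and for each term $a^{jk}_\alpha D^\alpha u_k$ with $|\alpha|\geq t_k$ integrates by parts to transfer $|\alpha|-t_k\leq s_j$ derivatives onto $\overline{a^{jk}_\alpha}\,\zeta_j$. This uses exactly the $C^{s_j}$ regularity of the coefficient and immediately yields the bound $C\,\|u_k\|_{t_k,p,\mathbb{R}^n}\|\zeta_j\|_{s_j,p',\mathbb{R}^n}$, with no multiplier or commutator machinery. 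Your duality reduction of the negative-order multiplication estimate is, when unwound, the same computation; but the integration-by-parts formulation is more transparent and sidesteps the somewhat vague discussion of how the weights $\langle\xi,\lambda\rangle_j$ and $\langle\xi,\lambda\rangle_k$ interact. In fact for the $A_{jk}$ part one can work with ordinary norms throughout and then simply use that, for $|\lambda|\geq\lambda^\sharp$, the parameter-dependent negative-order norm is dominated by the ordinary one.
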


\begin{proof}  Let $\langle\,\cdot,\cdot\,\rangle_{\mathbb{R}^n}$
denote the pairing between $H^{(-s)}_p(\mathbb{R}^n)$ and its
adjoint space $W^{(s)}_{p^{\prime}}(\mathbb{R}^n)$, where
$p^{\prime} = p/(p-1)$ and both spaces are equipped with their
parameter dependent norms (see \cite[Section~1]{GK} and
\cite[Theorem~2.6.1, p.~198]{T}). Then for $ u \in
W^{(t)}_p(\mathbb{R}^n)$ and $\zeta  =
\left(\zeta_1,\ldots,\zeta_N\right)^T \in
C^{\infty}_0(\mathbb{R}^n)^N$,
\begin{equation*}
  \langle\left(A(x,D)u - \lambda\,u\right),\zeta\rangle_{\mathbb{R}^n} = \sum_{j=1}^N\left(\sum_{k=1}^NA_{jk}(x,D)u_k - \lambda\,\delta_{jk}u_k\right)(\overline{\zeta_j}),
\end{equation*}

where $\delta_{jk}$ denotes the Kronecker delta, and for each $j$
\[
  \left(\sum_{k=1}^NA_{jk}(x,D)u_k - \lambda\,\delta_{jk}u_k\right)(\overline{\zeta_j})
  \]
denotes the value of the distribution $\sum_{k=1}^NA_{jk}(x,D)u_k - \lambda\,\delta_{jk}u_k$ at $\overline{\zeta_j}$.

Let us fix our attention upon a particular pair $j,k$. Then
\[
\begin{aligned}
 \left(A_{jk}(x,D)u_k - \lambda\,\delta_{jk}u_k\right)(\overline{\zeta_j}) &
 = \sum_{\substack{t_k\leq|\alpha|\leq s_j+t_k\\|\beta|=t_k}}
 \left(D^{\beta}u_k,D^{\alpha-\beta}\overline{a^{jk}_{\alpha}(x)}\zeta_j\right)_{\mathbb{R}^n}
 \\
 &
 \quad
  + \left(\sum_{|\alpha| < t_k}a^{jk}_{\alpha}(x)D^{\alpha}u_k,\zeta_j\right)_{\mathbb{R}^n}
   - \lambda\,\delta_{jk}\left(u_k,\zeta_j\right)_{\mathbb{R}^n},
 \end{aligned}
\]
where $\left(\cdot,\cdot\right)_{\mathbb{R}^n}$ denotes the pairing between $L_p(\mathbb{R}^n)$
and its adjoint space $L_{p^{\prime}}(\mathbb{R}^n)$. Hence
\[
\begin{aligned}
 \bigl|\left(A_{jk}(x,D)u_k - \lambda\,\delta_{jk}u_k\right)(\overline{\zeta_j})\bigr| & \leq
 C\Bigl(\Vert\,u_k\,\Vert_{t_k,p,\mathbb{R}^n}\Vert\,\zeta_j\,\Vert_{s_j,p^{\prime},\mathbb{R}^n}
 \\
 & \quad +
 |\lambda|^{t_k/m_k}|\lambda|^{s_j/m_j}\Vert\,u_k\,\Vert_{0,p,\mathbb{R}^n} \Vert\,\zeta_j\,
 \Vert_{0,p^{\prime},\mathbb{R}^n}\Bigr) \\
 & \leq C|||u_k|||^{(k)}_{t_k,p,\mathbb{R}^n}|||\zeta_j|||^{(j)}_{s_j,p^{\prime},\mathbb{R}^n},
 \end{aligned}
\]
where the constant $C$ does not depend upon $u_k, \zeta_j$, and $\lambda$.
It now follows from the foregoing results  that
\begin{equation*}
 \bigl|\langle\left(A(x,D) - \lambda\,I_N\right)u,\zeta\rangle_{\Omega}\bigr| \leq C|||u|||_{(t),p,\mathbb{R}^n}|||\zeta|||_{(s),p^{\prime},\mathbb{R}^n},
\end{equation*}
and the assertion of the proposition follows immediately from this last result.
\end{proof}

We now turn to the main results of this section.
\begin{proposition} \label{P:3.2}
 Suppose that the spectral problem \eqref{E:1.1} is parameter-elliptic in $\mathcal{L}$.
 Suppose also that $u \in W^{(t)}_p(\mathbb{R}^n)$ and
 that $f$ is defined by \eqref{E:1.1}. Then there exists the constant
 $\lambda^{\prime} = \lambda^{\prime}(p) > 0$ such that for $\lambda \in \mathcal{L}$ with $|\lambda| \geq \lambda^{\prime}$, the a priori estimate
 \begin{equation} \label{E:3.1}
  |||u|||_{(t),p,\mathbb{R}^n} \leq C|||f|||_{(-s),p,\mathbb{R}^n}
 \end{equation}
holds, where the constant $C$ does not depend upon $u$ and $\lambda$.
 \end{proposition}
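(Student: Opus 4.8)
The plan is to establish the a priori estimate \eqref{E:3.1} by the standard localization-and-freezing technique adapted to the Douglis-Nirenberg structure, exploiting the hierarchical block structure encoded in the Kozhevnikov conditions of Definition~\ref{D:2.3}. The first step is to reduce to the model (constant-coefficient, homogeneous) case: for each fixed point $x^0 \in \mathbb{R}^n$ one considers the operator $\mathring{A}(x^0,D) - \lambda I_N$ and seeks a resolvent estimate. Because the diagonal orders $m_j$ fall into $d$ distinct groups, one cannot treat all components on an equal footing; instead one proceeds by induction on the blocks $r = 1,\ldots,d$, using that $\det(\mathcal{A}^{(r)}_{11}(x^0,\xi) - \lambda \tilde{I}_{r,0})$ is bounded below by $\kappa_r|\xi|^{N_{r-1}}$ on the unit sphere $\langle \xi,\lambda\rangle_{k_r} = 1$, and then scaling. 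The upshot of this model analysis is a pointwise bound on the symbol of the solution operator which, via the Mikhlin-Hörmander multiplier theorem in $L_p$, $1<p<\infty$, yields
\[
|||v|||_{(t),p,\mathbb{R}^n} \leq C |||\mathring{A}(x^0,D)v - \lambda v|||_{(-s),p,\mathbb{R}^n}
\]
for $v \in W^{(t)}_p(\mathbb{R}^n)$, with $C$ independent of $x^0$ and $\lambda \in \mathcal{L}$. This is the technical heart of the argument and the step I expect to be the main obstacle, since verifying the Mikhlin-type bounds on the parameter-dependent symbols requires carefully tracking how derivatives in $\xi$ interact with the anisotropic weights $\langle\xi,\lambda\rangle_j$ across the different blocks.

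Next I would globalize. Using uniform continuity of the top-order coefficients (guaranteed by minimal smoothness, Definition~\ref{D:2.2}) one chooses, for a small parameter $\varepsilon > 0$ to be fixed later, a locally finite cover of $\mathbb{R}^n$ by balls $B_\nu$ of a fixed radius together with a subordinate partition of unity $\{\varphi_\nu\}$ with uniformly bounded derivatives, such that on each $B_\nu$ the top-order coefficients deviate from their values at the center $x^\nu$ by at most $\varepsilon$. Applying the frozen-coefficient estimate to $\varphi_\nu u$ and commuting $\varphi_\nu$ past $A(x,D)$, the commutator terms are of strictly lower order and, after using an interpolation (Ehrling-type) inequality to absorb them, one obtains
\[
|||\varphi_\nu u|||_{(t),p,B_\nu} \leq C\bigl( |||\varphi_\nu f|||_{(-s),p,B_\nu} + \varepsilon |||u|||_{(t),p,B_\nu} + |\lambda|^{-\delta}|||u|||_{(t),p,B_\nu}\bigr)
\]
for some $\delta > 0$, where the $\varepsilon$ term comes from the coefficient oscillation and the $|\lambda|^{-\delta}$ term from the lower-order remainders, both estimated with the help of Proposition~\ref{P:3.1}. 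Raising to the $p$-th power, summing over $\nu$ (using the finite-overlap property of the cover), and then choosing $\varepsilon$ small and $\lambda^{\prime}$ large enough that the coefficients $C\varepsilon$ and $C|\lambda|^{-\delta}$ are together less than $\tfrac{1}{2}$, one absorbs the $|||u|||_{(t),p,\mathbb{R}^n}$ terms on the right into the left-hand side. The dependence $\lambda^{\prime} = \lambda^{\prime}(p)$ enters precisely here, through the constant in the $L_p$ multiplier theorem.

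Finally I would assemble the pieces: the absorption yields \eqref{E:3.1} with a constant $C$ depending only on $p$, the structural data $\{s_j,t_j\}$, the sector $\mathcal{L}$, the ellipticity constants $\kappa_r$, and uniform bounds on the coefficients, but not on the individual $u$ or $\lambda$. One should remark that the passage from the boundedness of the symbol of the resolvent to an $L_p$ bound is where $1 < p < \infty$ is essential and where the equivalence, noted in Section~\ref{S:2}, of the $W^s_p$ and $H^s_p$ norms for $s \geq 0$ is used so that the parameter-dependent norms on $W^{(t)}_p$ and $H^{(-s)}_p$ can be treated as Bessel-potential norms on which the Fourier multiplier machinery acts. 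The only genuinely delicate point beyond the model estimate is making sure the lower-order remainder terms carry a negative power of $|\lambda|$ so they can be absorbed; this follows by examining the homogeneity of each term and using the weights $|\lambda|^{s/m_j}$ built into the parameter-dependent norms.
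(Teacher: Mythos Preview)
Your outline is correct and follows essentially the same localization-and-freezing strategy as the paper, which packages the frozen-coefficient estimate and the small-support local estimate as the auxiliary Propositions~\ref{P:3.4} and~\ref{P:3.5} before carrying out the partition-of-unity summation. The one step you pass over too quickly is the summation $\sum_\nu |||\varphi_\nu f|||_{(-s),p}^p \leq C\,|||f|||_{(-s),p,\mathbb{R}^n}^p$: finite overlap alone does not give this for negative-order norms, and the paper handles it by introducing an equivalent $\ell^{p'}$-type norm on $W^{(s)}_{p'}(\mathbb{R}^n)$, embedding it isometrically into an $L_{p'}$ product, and then representing each element of $H^{(-s)}_p(\mathbb{R}^n)$ by an $L_p$ vector via Hahn--Banach, after which the finite-overlap argument applies at the $L_p$ level.
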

\begin{proposition} \label{P:3.3}
 Suppose that the spectral problem \eqref{E:1.1} is parameter-elliptic in $\mathcal{L}$. Then there exists the constant
 $\lambda^0 = \lambda^0(p) > 0$ such that for $\lambda \in \mathcal{L}$ with $|\lambda| \geq \lambda^0$, the spectral problem  has a unique solution
 $u \in W^{(t)}_p(\mathbb{R}^n)$ for every $f \in H^{(-s)}_p(\mathbb{R}^n)$,  and the a priori estimate
 \[
  |||u|||_{(t),p,\mathbb{R}^n} \leq C|||f|||_{(-s),p,\mathbb{R}^n}
 \]
holds, where the constant $C$ does not depend upon $f$ and $\lambda$.
\end{proposition}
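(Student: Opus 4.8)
The plan is to combine Proposition~\ref{P:3.2} with an existence argument. Proposition~\ref{P:3.2} already provides the a priori estimate \eqref{E:3.1} for $\lambda \in \mathcal{L}$ with $|\lambda|$ large; together with Proposition~\ref{P:3.1}, which guarantees $f = (A(x,D)-\lambda I_N)u \in H^{(-s)}_p(\mathbb{R}^n)$ and the mapping $u \mapsto f$ is bounded from $W^{(t)}_p(\mathbb{R}^n)$ to $H^{(-s)}_p(\mathbb{R}^n)$, this shows that the operator $A(x,D)-\lambda I_N \colon W^{(t)}_p(\mathbb{R}^n) \to H^{(-s)}_p(\mathbb{R}^n)$ is bounded, injective, and has closed range for such $\lambda$. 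So the only thing left to prove is surjectivity, and then the displayed estimate in the statement is just \eqref{E:3.1} restated. I would therefore state at the outset that we may take $\lambda^0 \geq \lambda'$ and that uniqueness and the estimate are immediate, reducing everything to showing the range is all of $H^{(-s)}_p(\mathbb{R}^n)$.

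For surjectivity I would use a continuity/deformation argument in the parameter $\lambda$ along a ray in $\mathcal{L}$, or equivalently a Neumann-series perturbation off a model operator. The cleanest route: fix a ray $\{\,r e^{i\theta} : r \geq 0\,\} \subset \mathcal{L}$ and consider the family $T_\lambda = A(x,D) - \lambda I_N$ as $\lambda$ ranges over the part of this ray with $|\lambda| \geq \lambda^0$. By the uniform a priori estimate, each $T_\lambda$ has closed range and the family of (one-sided) inverses $T_\lambda^{-1} \colon \operatorname{ran} T_\lambda \to W^{(t)}_p$ is uniformly bounded. Then a standard argument shows the set of $\lambda$ on this ray for which $T_\lambda$ is surjective is open and closed in $\{\lambda \in \text{ray} : |\lambda| \geq \lambda^0\}$; it is nonempty provided we can establish surjectivity for at least one $\lambda$. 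Openness follows from the uniform bound via a Neumann series (if $T_{\lambda_0}$ is onto, then for $|\lambda - \lambda_0|$ small, $T_\lambda = T_{\lambda_0}(I - (\lambda-\lambda_0)T_{\lambda_0}^{-1})$ is onto); closedness follows from the uniform estimate together with closedness of the ranges. The alternative — and perhaps what the author does — is to build a parametrix directly: freeze coefficients, use the parameter-ellipticity condition \eqref{E:2.1} to invert the principal symbol for large $|\lambda|$, patch with a partition of unity, and absorb the commutator and lower-order error terms using the smallness coming from large $|\lambda|$ in the parameter-dependent norms.

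I would establish the base case (surjectivity for one value, or for $|\lambda|$ sufficiently large outright) by the constant-coefficient / parametrix construction. For the constant-coefficient operator $\mathring{A}(\xi) - \lambda \tilde I_{\cdot,0}$ associated with the model problem at a frozen point $x_0$, parameter-ellipticity gives invertibility of the symbol for $\lambda \in \mathcal{L}$, $(\xi,\lambda) \neq 0$, with the correct homogeneity, so by the Mikhlin–Hörmander multiplier theorem the model operator is invertible on the scale of parameter-dependent Bessel-potential spaces with norm bounds uniform in $\lambda$. Localizing via a partition of unity subordinate to a cover on which the coefficients vary little (using minimal smoothness, Definition~\ref{D:2.2}, to control the oscillation of the top-order coefficients, and the decay at infinity is not needed here — only boundedness), one assembles an approximate right inverse $R_\lambda$ with $T_\lambda R_\lambda = I + S_\lambda$ where $\|S_\lambda\| < 1$ for $|\lambda| \geq \lambda^0$ after enlarging $\lambda^0$; then $R_\lambda(I+S_\lambda)^{-1}$ is a genuine right inverse, giving surjectivity. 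Combined with injectivity from \eqref{E:3.1}, this yields bijectivity, hence the unique solvability claimed.

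The main obstacle is the parametrix construction under only \emph{minimal} smoothness of the coefficients: the top-order coefficients $a^{jk}_\alpha$ with $|\alpha| = s_j + t_k$ lie only in $C^{s_j}(\overline{\mathbb{R}^n})$ (and when $s_j = 0$, only in $L_\infty$ away from the very top order), so one cannot freely differentiate them, and the commutator estimates that normally control the patching error must be done carefully — the Douglis–Nirenberg structure means different rows/columns tolerate different numbers of derivatives. The device for handling this is to split each top-order coefficient into a smooth part plus a small-in-$L_\infty$ (or small-oscillation) remainder, treat the smooth part by the classical parametrix calculus and absorb the remainder into the small error $S_\lambda$; the gain of negative powers of $|\lambda|$ from the parameter-dependent norms is what makes this absorption work, exactly as in the proof of Proposition~\ref{P:3.2}. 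I would expect the author to invoke the estimate \eqref{E:3.1} itself, plus a deformation argument of the type sketched above, to avoid redoing the full parametrix, so that the proof is in fact short: surjectivity via method of continuity anchored at large $|\lambda|$ where a Neumann series off the frozen-coefficient model converges.
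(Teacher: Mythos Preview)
Your proposal is correct, and the parametrix route you sketch is essentially what the paper indicates. The paper does not spell out the proof of Proposition~\ref{P:3.3} separately; it says the argument is obtained by modifying Agranovich--Vishik in the same way as for Proposition~\ref{P:3.2}, and the key ingredient already in place is Proposition~\ref{P:3.4}, which gives the \emph{exact} inverse of the frozen-coefficient operator $\mathring{A}(x^0,D)-\lambda I_N$ with bounds uniform in $x^0$ and $\lambda$. One then builds an approximate right inverse for $A(x,D)-\lambda I_N$ by patching these local inverses with the same partition of unity $\{\eta_\ell\}$ used in the proof of Proposition~\ref{P:3.2}; the commutator and variable-coefficient errors carry a factor $|\lambda|^{-1/m_1}$ in the parameter-dependent norms (exactly the mechanism in \eqref{E:3.3}--\eqref{E:3.4}), so a Neumann series yields a genuine right inverse, hence surjectivity, for all $|\lambda|\ge\lambda^0$. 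Uniqueness and the estimate then come from Proposition~\ref{P:3.2}, as you say.

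Two remarks on your write-up. First, the method-of-continuity layer is superfluous: the parametrix already delivers surjectivity on the whole of $\{\lambda\in\mathcal{L}:|\lambda|\ge\lambda^0\}$, not merely at a single anchor point, so there is nothing to propagate along a ray. Second, the minimal-smoothness obstacle is handled more simply than by mollifying the top-order coefficients: since each $a^{jk}_\alpha$ with $|\alpha|=s_j+t_k$ lies in $C^0(\overline{\mathbb{R}^n})$, one just uses uniform continuity to make $|a^{jk}_\alpha(x)-a^{jk}_\alpha(x^0)|$ small on cubes of diameter $\le r_0$, exactly as in the choice of $r_0$ at the start of the proof of Proposition~\ref{P:3.5}. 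No splitting into smooth-plus-small is needed.
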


The proofs of Propositions~\ref{P:3.2} and \ref{P:3.3} can be
achieved by modifying the arguments given in the proofs of
Theorems~4.1 and 5.1 of \cite{AV}. Furthermore, since the proof of
Proposition~\ref{P:3.3} is somewhat similar to that of
Proposition~\ref{P:3.2}, we will restrict ourselves to the proof of
this latter proposition.

Before beginning the proof, let us now present some results which we require below.
\begin{proposition} \label{P:3.4}
Suppose that the  spectral problem \eqref{E:1.1} is
parameter-elliptic in $\mathcal{L}$ and that $f \in
H^{(-s)}_p(\mathbb{R}^n)$.
 Suppose also that $x^0 \in \mathbb{R}^n$. Then there exits the
constant $\lambda_1 = \lambda_1(p) > 0$ such that for $\lambda \in \mathcal{L}$ with $|\lambda| \geq \lambda_1$, the equation
\[
 \textrm{\r{A}}(x^0,D)u(x) - \lambda\,u(x) = f(x) \quad\textrm{for}\quad x \in \mathbb{R}^n
 \]
has a unique solution $u \in W^{(t)}_p(\mathbb{R}^n)$ and the a priori estimate
\[
|||u|||_{(t),p,\mathbb{R}^n} \leq C_1|||f|||_{(-s),p,\mathbb{R}^n},
\]
holds, where the constant $C_1$ does not depend upon $x^0, f,$ and $\lambda$.
\end{proposition}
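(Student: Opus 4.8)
The plan is to solve the constant-coefficient model equation by Fourier synthesis and to read off both the unique solvability and the a priori estimate from pointwise bounds on the symbol $M(\xi,\lambda)=\mathring{A}(x^0,\xi)-\lambda I_N$ and on the $\xi$-derivatives of $M(\xi,\lambda)^{-1}$, the decisive tool being a Mikhlin--H\"ormander multiplier theorem with parameter. First I would pass to the Fourier side: regarding $u\in W^{(t)}_p(\mathbb{R}^n)$ and $f\in H^{(-s)}_p(\mathbb{R}^n)$ as tempered distributions, the equation $\mathring{A}(x^0,D)u-\lambda u=f$ is equivalent to $M(\xi,\lambda)\,Fu(\xi)=Ff(\xi)$, where $\mathring{A}(x^0,\xi)=\bigl(\mathring{A}_{jk}(x^0,\xi)\bigr)_{j,k=1}^N$ and each $\mathring{A}_{jk}(x^0,\cdot)$ is a homogeneous polynomial of degree $s_j+t_k$; by Assumption~\ref{A:2.1} these degrees are positive, so $\mathring{A}(x^0,0)=0$ and $M(0,\lambda)=-\lambda I_N$. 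The whole matter therefore reduces to the invertibility of $M(\xi,\lambda)$ and to suitable estimates for the entries of its inverse.

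Next comes the symbol analysis. Using the parameter-ellipticity hypothesis of Definition~\ref{D:2.3}---that is, the nested lower bounds \eqref{E:2.1} for $r=1,\dots,d$---together with a recursive block (Schur-complement) reduction along the lines of the proof of Theorem~4.1 of \cite{AV}, I would show that there is a constant $\lambda_1=\lambda_1(p)>0$ such that $\det M(\xi,\lambda)\neq0$ for all $\xi\in\mathbb{R}^n$ and all $\lambda\in\mathcal{L}$ with $|\lambda|\geq\lambda_1$, and, moreover, that for every multi-index $\alpha$ with $|\alpha|\leq\lfloor n/2\rfloor+1$,
\[
 \bigl|\,D^{\alpha}_{\xi}\,[\,M(\xi,\lambda)^{-1}\,]_{jk}\,\bigr|
 \;\leq\; C_{\alpha}\,|\xi|^{-|\alpha|}\,\langle\xi,\lambda\rangle_j^{-t_j}\,\langle\xi,\lambda\rangle_k^{-s_k}
 \qquad(\xi\neq0,\ 1\leq j,k\leq N),
\]
with $C_{\alpha}$ independent of $\lambda$ and of $x^0$. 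This $x^0$-independence is exactly what makes the ``uniform'' part of the statement cost no extra work: the constants $\kappa_r$ in \eqref{E:2.1} do not depend on $x$, and minimal smoothness makes every coefficient $a^{jk}_{\alpha}$ bounded, so $M(\xi,\lambda)$, its cofactors, and the lower bound for $|\det M(\xi,\lambda)|$ are controlled uniformly in $x^0$. Concretely, I would prove the displayed bound first on the quasi-spheres $\langle\xi,\lambda\rangle_{k_r}=1$ by compactness and continuity, invoking \eqref{E:2.1} to keep $\det M$ away from zero, and then extend it to all $(\xi,\lambda)$ by the homogeneity built into $\mathring{A}(x^0,\cdot)$ and \eqref{E:2.1}; the point $\xi=0$ is harmless, since there $M(\xi,\lambda)^{-1}=-\lambda^{-1}I_N$ and $M(\xi,\lambda)^{-1}$ is smooth nearby.

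The conclusion then follows by the standard multiplier argument. Put $m_{jk}(\xi,\lambda)=\langle\xi,\lambda\rangle_j^{t_j}\,[\,M(\xi,\lambda)^{-1}\,]_{jk}\,\langle\xi,\lambda\rangle_k^{s_k}$. The elementary estimates $\bigl|D^{\beta}_{\xi}\langle\xi,\lambda\rangle_{\ell}^{\sigma}\bigr|\leq C_{\beta,\sigma}\,|\xi|^{-|\beta|}\langle\xi,\lambda\rangle_{\ell}^{\sigma}$ (which use $\langle\xi,\lambda\rangle_{\ell}\geq|\xi|$), the symbol bound above, and the Leibniz rule give $|\xi|^{|\alpha|}\bigl|D^{\alpha}_{\xi}m_{jk}(\xi,\lambda)\bigr|\leq C$ for $|\alpha|\leq\lfloor n/2\rfloor+1$, uniformly in $\lambda$ and $x^0$; hence each $m_{jk}(\cdot,\lambda)$ is an $L_p(\mathbb{R}^n)$ Fourier multiplier with norm bounded independently of $\lambda$. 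Setting $u=F^{-1}\bigl(M(\cdot,\lambda)^{-1}Ff\bigr)$ then yields a tempered distribution solving the equation; writing $g_k=F^{-1}\langle\xi,\lambda\rangle_k^{-s_k}Ff_k$ (so that $\|g_k\|_{0,p,\mathbb{R}^n}=|||f_k|||^{(k)}_{-s_k,p,\mathbb{R}^n}$) and $F^{-1}\langle\xi,\lambda\rangle_j^{t_j}Fu_j=\sum_k F^{-1}\bigl(m_{jk}(\cdot,\lambda)Fg_k\bigr)$, the multiplier bounds give $|||u_j|||^{(j)}_{t_j,p,\mathbb{R}^n}\leq C\sum_k|||f_k|||^{(k)}_{-s_k,p,\mathbb{R}^n}$, which is the asserted estimate with $C_1$ independent of $x^0,f,\lambda$. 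Uniqueness is immediate: if $u\in W^{(t)}_p(\mathbb{R}^n)$ solves the homogeneous equation then $M(\xi,\lambda)Fu=0$, and since $M(\xi,\lambda)$ is invertible for each $\xi$ we get $Fu=0$, hence $u=0$.

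The architecture just described is routine; the real obstacle---and the step I expect to absorb nearly all of the effort---is the symbol analysis of the second paragraph, namely converting the nested determinant conditions \eqref{E:2.1}, which constrain only the \emph{truncated} matrices $\mathcal{A}^{(r)}_{11}(x,\xi)-\lambda\tilde{I}_{r,0}$, into two-sided control of the \emph{full} matrix $\mathring{A}(x^0,\xi)-\lambda I_N$ and of all $\xi$-derivatives of its inverse, uniformly for $\lambda\in\mathcal{L}$ with $|\lambda|$ large and uniformly in $x^0$. This is where the Douglis-Nirenberg structure with unequal diagonal orders bites, forcing the recursive Schur-complement bookkeeping borrowed from \cite{AV}.
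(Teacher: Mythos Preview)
Your proposal is correct and follows essentially the same architecture as the paper's proof: establish pointwise bounds on the entries of $(\mathring{A}(x^0,\xi)-\lambda I_N)^{-1}$ and their $\xi$-derivatives, then apply a Mikhlin--H\"ormander multiplier argument to conclude. The only real difference is in sourcing the symbol analysis: where you propose to carry out a Schur-complement reduction by hand in the spirit of \cite{AV}, the paper simply invokes \cite{DMV}---which shows, via the Newton polygon, that the uniform Kozhevnikov conditions \eqref{E:2.1} yield directly the determinant lower bound $|\det(\mathring{A}(x^0,\xi)-\lambda I_N)|\geq C_0\prod_j\langle\xi,\lambda\rangle_j^{m_j}$ and the Mikhlin-type estimates $|\xi^{\alpha}D^{\alpha}_{\xi}\tilde{a}_{jk}|\leq C_0'\langle\xi,\lambda\rangle_k^{-s_k}\langle\xi,\lambda\rangle_j^{-t_j}$ for $\alpha\in\{0,1\}^n$---and then cites \cite[Proposition~3.2]{DF} for the multiplier step. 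Since the introduction already notes that the Kozhevnikov and Newton-polygon formulations are equivalent, your hands-on route and the paper's citation of \cite{DMV} amount to the same thing; the paper's version is just shorter because it outsources precisely the step you flag as the hard one.
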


\begin{proof}
It follows from Definition
\ref{D:2.3} and \cite{DMV} that there exists the constant
$\lambda_0 > 0$ such that for $\lambda \in \mathcal{L}$  with
$|\lambda| \geq \lambda_0, \xi \in \mathbb{R}^n$, and $\alpha$ a
multi-index whose entries are either $0$ or $1$,
\begin{align*}
 \bigl|\,\textrm{det}\,\left(\textrm{\r{A}}(x^0,\xi) - \lambda\,I_N\right)\bigr| &
 \geq C_0\prod_{j=1}^N\langle\,\xi,\lambda\rangle_j^{m_j} \quad \textrm{and}
 \\
 \bigl|\,\xi^{\alpha}D^{\alpha}_{\xi}\tilde{a}_{jk}(x^0,\xi,\lambda)\,\bigr| &
 \leq
 C_0^{\prime}\langle\,\xi,\lambda\,\rangle^{-s_k}_k\langle\,\xi,\lambda\rangle^{-t_j}_j,
\end{align*}
where the constants $C_0$ and $C_0^{\prime}$ do not depend upon
$\alpha, x^0, \xi, \lambda, j, k$, and where we have written
$\left(\textrm{\r{A}}(x^0,\xi) - \lambda\,I_N\right)^{-1} =
\left(\tilde{a}_{jk}(x^0,\xi,\lambda\right)_{j,k=1}^N$. The
assertions of the proposition now follow from the same arguments
as those used in the proof of \cite[Proposition~3.2]{DF}.
\end{proof}
\begin{proposition} \label{P:3.5}
Suppose that the spectral problem \eqref{E:1.1} is parameter-elliptic in $\mathcal{L}$.
Then there exist
the constants $r_0 = r_0(p)$ and $\lambda_2 = \lambda_2(p)$ in $\mathbb{R}_+$ such that for each
$x^0 \in \mathbb{R}^n$ one can find a   neighbourhood $V$ of this point with diam\,$V \leq r_0$  for which
the a priori estimate
\[
 |||u|||_{(t),p,\mathbb{R}^n} \leq C|||A(x,D)u - \lambda\,u|||_{(-s),p,\mathbb{R}^n}
\]
holds for each $u \in W^{(t)}_p(\mathbb{R}^n)$ and  $\lambda   \in \mathcal{L}$  for which
 supp\,$u \subset V$ and $|\lambda| \geq \lambda_2$,
 where the constant
$C$ does not depend upon $u$ and  $\lambda$, and where diam denotes diameter and supp denotes support.
\end{proposition}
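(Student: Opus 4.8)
The plan is to prove Proposition~\ref{P:3.5} by freezing coefficients at $x^0$ and using Proposition~\ref{P:3.4} together with the weak-smoothness-free perturbation estimate from Proposition~\ref{P:3.1}. First I would write, for $u$ with $\textrm{supp}\,u \subset V$,
\[
 A(x,D)u - \lambda\,u = \textrm{\r{A}}(x^0,D)u - \lambda\,u + \bigl(A(x,D) - \textrm{\r{A}}(x^0,D)\bigr)u,
\]
and apply Proposition~\ref{P:3.4} to the frozen operator $\textrm{\r{A}}(x^0,D) - \lambda$, which for $|\lambda| \geq \lambda_1(p)$ gives
\[
 |||u|||_{(t),p,\mathbb{R}^n} \leq C_1\bigl|||A(x,D)u - \lambda\,u|||_{(-s),p,\mathbb{R}^n}
  + C_1\bigl|||\bigl(A(x,D) - \textrm{\r{A}}(x^0,D)\bigr)u|||_{(-s),p,\mathbb{R}^n}.
\]
The task is then to absorb the last term into the left-hand side by choosing $\textrm{diam}\,V$ small and $|\lambda|$ large.

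The operator $A(x,D) - \textrm{\r{A}}(x^0,D)$ splits into two pieces: the top-order part $\sum_{|\alpha| = s_j + t_k}\bigl(a^{jk}_\alpha(x) - a^{jk}_\alpha(x^0)\bigr)D^\alpha$ and the genuinely lower-order part $\sum_{|\alpha| < s_j + t_k} a^{jk}_\alpha(x)D^\alpha$. For the top-order difference, the coefficient $a^{jk}_\alpha(x) - a^{jk}_\alpha(x^0)$ is small in $L_\infty(V)$ — of size $\varepsilon(r_0)$ with $\varepsilon(r_0) \to 0$ as $r_0 \to 0$ — by the uniform continuity built into minimal smoothness (here one uses that for $s_j > 0$ the coefficients lie in $C^{s_j}(\overline{\mathbb{R}^n})$, while for $s_j = 0$ the top coefficients $|\alpha| = t_k$ lie in $C^0(\overline{\mathbb{R}^n})$, so in all cases the relevant top coefficients are uniformly continuous). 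Repeating the duality/pairing computation of Proposition~\ref{P:3.1} — integrating by parts to distribute $t_k$ derivatives onto the test function — yields, uniformly in $x^0$,
\[
 \bigl|||\,\textrm{top-order part}\,|||_{(-s),p,\mathbb{R}^n} \leq C\,\varepsilon(r_0)\,|||u|||_{(t),p,\mathbb{R}^n}.
\]
For the lower-order part, each term gains at least one order, which by interpolation (Young's inequality for the fractional powers of $|\lambda|$ appearing in the parameter-dependent norms, exactly as in the estimates of \cite{AV}) can be bounded by $C|\lambda|^{-\delta}|||u|||_{(t),p,\mathbb{R}^n}$ for some $\delta > 0$, again uniformly in $x^0$.

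Putting these together gives
\[
 |||u|||_{(t),p,\mathbb{R}^n} \leq C_1|||A(x,D)u - \lambda\,u|||_{(-s),p,\mathbb{R}^n}
 + C\bigl(\varepsilon(r_0) + |\lambda|^{-\delta}\bigr)|||u|||_{(t),p,\mathbb{R}^n}.
\]
I would first fix $r_0 = r_0(p)$ so small that $C\varepsilon(r_0) \leq 1/4$, then choose $\lambda_2 = \lambda_2(p) \geq \lambda_1(p)$ so large that $C|\lambda|^{-\delta} \leq 1/4$ for $|\lambda| \geq \lambda_2$; absorbing the last term then yields the assertion with $V$ any ball of diameter $\leq r_0$ about $x^0$. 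The main obstacle is making the top-order coefficient-freezing estimate genuinely uniform in $x^0$ despite the merely $L_\infty$ (not $C^0$) regularity of the sub-top coefficients when $s_j = 0$: one must be careful to integrate by parts so that exactly $t_k$ derivatives land on the test function before invoking the uniform continuity only of the $|\alpha| = t_k$ coefficients, with all $|\alpha| < t_k$ terms handled by the gain-of-order argument instead. The rest is a routine adaptation of the localization step in the proof of \cite[Theorem~4.1]{AV}.
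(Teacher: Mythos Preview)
Your approach is correct and is essentially the same as the paper's: freeze the top-order coefficients at $x^0$, invoke Proposition~\ref{P:3.4} for $\textrm{\r{A}}(x^0,D)-\lambda$, and absorb the perturbation by splitting it into a top-order part (small by uniform continuity of the top coefficients, which is exactly how the paper fixes $r_0$) and a lower-order part (handled by a $|\lambda|^{-\delta}$ gain). The paper compresses your perturbation step into a reference to \cite[Proposition~4.1]{DF}, but the mechanism is identical.

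One small remark: when $s_j>0$ the duality computation puts $|\alpha|-t_k$ derivatives onto the product $\bigl(a^{jk}_\alpha(x)-a^{jk}_\alpha(x^0)\bigr)\zeta_j$, so some commutator terms pick up $D^{\gamma}a^{jk}_\alpha$ with $|\gamma|\geq 1$ rather than the small difference $a^{jk}_\alpha(x)-a^{jk}_\alpha(x^0)$; these terms are not $\varepsilon(r_0)$-small but are lower order in $\zeta_j$ and hence fall under your $|\lambda|^{-\delta}$ bucket. Your combined bound $C(\varepsilon(r_0)+|\lambda|^{-\delta})$ is unaffected. Also note that the paper's proof contains an extra final paragraph establishing the sharper localized bound $|||(A(x,D)-\lambda)u|||_{(-s),p,\mathbb{R}^n}\leq C_0|||(A(x,D)-\lambda)u|||_{(-s),p,V}$ via a cutoff $\chi$; this is not part of the stated proposition but is used in the proof of Proposition~\ref{P:3.2} (inequality~\eqref{E:3.2}), so if you intend to continue to that proof you will need it.
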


\begin{proof}
 To begin with let us choose $r_0$ so that
for $x,y \in \mathbb{R}^n$ with $|x-y| \leq r_0$ we have for each triple $(j,k,\alpha)$ with  $1 \leq j,k \leq N$ and $|\alpha| = s_j+t_k, \,
|a^{jk}_{\alpha}(x) - a^{jk}_{\alpha}(y)| \leq 1/8C_1\delta_1$,
where $C_1$  denotes the constant of Proposition
\ref{P:3.4}, $\delta_1 = N^2$\,max\,$\bigl\{\frac{(s_j+t_k)!}{s_j!}n_{jk}\,\bigr\}_{j,k=1}^N$ and
 $n_{jk}$ denotes the number of distinct multi-indices $\alpha$ for which $|\alpha| = s_j+t_k$.

 Next let  $x^0 \in \mathbb{R}^n$ and let $V \subset \mathbb{R}^n$ be a neighbourhood of $x^0$ with diam\,$V \leq   r_0$.
 Also let $u \in W^{(t)}_p(\mathbb{R}^n)$ such that supp\,$u \subset V$. Then bearing in mind the proof
 of Proposition \ref{P:3.1} as well as Proposition \ref{P:3.4}, we see that
  \[
  |||u|||_{(t),p,V} \leq C_1|||\left(\textrm{\r{A}}(x^0,D) - \lambda\,I_N\right)u|||_{(-s),p,\mathbb{R}^n}
 \]
for $\lambda$ and $C_1$ satisfying the conditions
cited in       Proposition \ref{P:3.4}. Hence it follows from arguments similar to those used in the proof of
\cite[Proposition 4.1]{DF} that
there exists the constant $\lambda_2 = \lambda_2(p) > 0$ such that
 \[
 |||u|||_{(t),p,V} \leq C_1^{\prime}|||\left(A(x,D)
 -\lambda\,I_N\right)u|||_{(-s),p,\mathbb{R}^n}\quad
 \textrm{for}\quad  \lambda \in \mathcal{L} \quad \textrm{with} \quad  |\lambda| \geq \lambda_2,
 \]
 where the constant $C_1^{\prime}$ has the same properties as the constant $C_1$.
 On the other hand, if we refer to the proof of Proposition \ref{P:3.1} for notation and argue as in that proof,
 then we obtain for $\zeta \in C^{\infty}_0(\mathbb{R}^n)^N$ and for values of $\lambda$ just cited,
 \begin{equation*}
  \langle\,\left(A(x,D) - \lambda\,I_N\right)u,\zeta\,\rangle_{\mathbb{R}^n} = \langle\,\left(A(x,D) - \lambda,I_N\right)u,\chi\,\zeta\rangle_V,
 \end{equation*}
where $\chi \in C^{\infty}_0(\mathbb{R}^n)$ such that $\chi(x) =
1$ for $x \in$ supp\,$u$ and supp\,$\chi \subset V$. Hence if we
take into account \cite[Proposition~3.5, p.~109]{CP},
\cite[Theorems~ 4.3.2.1, p.~317 and 4.8.2, p.~332]{T}, and the
fact that $\chi\,\zeta\in \textrm{\r{W}}^{(s)}_{p^{\prime}}(V)$,
it follows that
\[
 \begin{aligned}
  \bigl|\langle\,\left(A(x,D) - \lambda\,I_N\right)u,\zeta\,\rangle_{\mathbb{R}^n}\bigr| &\leq |||\left(A(x,D) -
  \lambda\,I_N\right)u|||_{(-s),p,V}|||\chi\,\zeta|||_{(s),p^{\prime},V} \\
  &\leq C_0|||\left(A(x,D) -\lambda\,I_N\right)u|||_{(-s),p,V}|||\zeta||_{(s),p^{\prime},\mathbb{R}^n},
 \end{aligned}
 \]
where
the constant $C_0$ does not depend upon $u$ and $\lambda$.
This completes the proof of the Proposition.
\end{proof}

  \noindent\textit{Proof of Proposition~\ref{P:3.2}.}
  For $t > 0$ let $Q(t)$ denote the open cube in $\mathbb{R}^n$ with centre at the origin and with sides parallel to the coordinate axes
and of length $2t$. Also let $\zeta, \chi \in C^{\infty}_0(\mathbb{R}^n)$ such that $\zeta(x) = 1$ for $x \in Q(1/2)$ and $\zeta(x) = 0$
 for $x \in \mathbb{R}^n \setminus Q(3/4)$, while $\chi(x) = 1$ for $x \in Q(13/16)$ and $\chi(x) = 0$ for $x \in \mathbb{R}^n \setminus Q(14/16)$.
 Then for $d > 0$ and $\gamma \in \mathbb{Z}^n$, we let $Q_{\gamma,d} =
 \bigl\{\,x \in \mathbb{R}^n\,\bigl|\,|x-d\gamma|\in Q(d)\,\bigr\}$,  $\zeta_{\gamma,d}(x) = \zeta(\left(x-d\gamma)/d\right)$,
 and $\chi_{\gamma,d}(x) = \chi\left((x-d\gamma)/d\right)$.

 Next let $d_0$ denote a constant satisfying $0 < d_0 < r_0/4n^{1/2}$ (see Proposition~\ref{P:3.5}). Then
 we shall henceforth fix $d \leq d_0$ and let $\{\,\gamma_j\,\}_1^{\infty}$ denote an enumeration of the members of $\mathbb{Z}^n$
 and put $Q_j = Q_{\gamma_j,d}, \zeta_j(x) = \zeta_{\gamma_j,d}(x)$ and $\chi_j(x) = \chi_{\gamma_j,d}(x)$.
Hence if for $j \geq 1$ we let $\eta_j(x) = \zeta_j(x)/\sum_{j=1}^{\infty}\zeta_j(x)$, then $\{\,Q_j\,\}_1^{\infty}$ is an open covering of
$\mathbb{R}^n$ and $\bigl\{\,\eta_j(x)\,\bigr\}_1^{\infty}$ a partition of
unity subordinate to this covering. Note also that for any $x \in \mathbb{R}^n$, $x$ can lie in at most $2^n$ of the $Q_j$.

Bearing in mind Proposition \ref{P:3.5} let $u \in W^{(t)}_p(\mathbb{R}^n)$ and $\lambda \in \mathcal{L}$ with $|\lambda| \geq \lambda_2$.
Then  we have
\[
 |||u|||_{(t),p,\mathbb{R}^n} \leq
   N^{1/p^{\prime}}\left(\sum_{j=1}^N|||\sum_{\ell \geq 1}\eta_{\ell}u_j|||_{t_j,p,\mathbb{R}^n}^{(j)\,p}\right)^{1/p},
 \]
 which leads to the inequality
 \begin{equation*}
  |||u|||_{(t),p,\mathbb{R}^n} \leq
  2^{n+1}N\left(\sum_{\ell \geq 1}|||\eta_{\ell}u|||^p_{(t),p,Q_{\ell}}\right)^{1/p},
 \end{equation*}
 and hence it follows from Proposition \ref{P:3.5} and  its proof that
 \begin{equation} \label{E:3.2}
|||u|||_{(t),p,\mathbb{R}^n} \leq C
                                \left(\sum_{\ell \geq 1}|||\bigl(A(x,D) - \lambda\,I_N\bigr)\eta_{\ell}u|||_{(-s),p,Q_{\ell}}^p\right)^{1/p},
\end{equation}
where here and for the remainder of this proof $C$ denotes a generic constant which may vary from inequality to inequality,
but in all cases it does not depend upon $u, \lambda$ and $\ell$ (see below). Note that here we have used the fact that under our present definitions of $V$ and
$\chi$, we can take the constant $C$ of Proposition  \ref{P:3.5} to be also independent of $x^0$.

Let us firstly fix our attention upon a particular $\ell \geq 1$. Then observing that
\[
\begin{aligned}
&  |||\left(A(x,D) - \lambda\,I_N\right)\eta_{\ell}u -
\eta_{\ell}\left(A(x,D)-
 \lambda\,I_N\right)u|||_{(-s),p,Q_{\ell}}  \\
& \qquad \leq  |||\left(A(x,D) - \lambda\,I_N\right)\eta_{\ell}u
 - \eta_{\ell}\left(A(x,D) - \lambda\,I_N\right)u|||_{(-s),p,\mathbb{R}^n},
 \end{aligned}
\]
and referring to the proof of Proposition \ref{P:3.1} for notation, we can argue as in that proof to show that
\[
 \begin{aligned}
  &\bigl|\langle\,\left(\bigl(A(x,D) - \lambda\,I_N\bigr)\eta_{\ell}u - \eta_{\ell}\bigl(A(x,D) -
  \lambda\,I_N\bigr)u\right),\zeta\rangle_{\mathbb{R}^n}\bigr| \\
  & \qquad \leq  C\sum_{j=1}^N\sum_{k=1}^N\left(\Vert\,\chi_{\ell}u_k\,\Vert_{t_k-1,p,Q_{\ell}}\Vert\,\chi_{\ell}\zeta_j\,\Vert_{s_j,p^{\prime},Q_{\ell}} +
   \Vert\,\chi_{\ell}u_k\,\Vert_{t_k,p,Q_{\ell}}\Vert\,\chi_{\ell}\zeta_j\Vert_{s_j-1,p^{\prime},Q_{\ell}}\right).
   \end{aligned}
\]
Hence it follows from \cite[Proposition~2.2]{ADF} and the fact
that $\chi_{\ell}$ has  support in $Q_{\ell}$ that
\begin{equation} \label{E:3.3}
 \begin{aligned}
  & \bigl|\langle\,\left(\bigl(A(x,D) - \lambda\,I_N\bigr)\eta_{\ell}u - \eta_{\ell}\left(A(x,D) -
  \lambda\,I_N\right)u\right), \zeta\,\rangle_{\mathbb{R}^n}\bigr|  \\
&  \qquad  \leq
C|\lambda|^{-1/m_1}|||u|||_{(t),p,Q_{\ell}}|||\zeta|||_{(s),p^{\prime},Q_{\ell}}.
 \end{aligned}
\end{equation}
Thus we conclude from \eqref{E:3.3} that
\[
 \begin{aligned}
 & |||\left(A(x,D) - \lambda\,I_N\right)\eta_{\ell}u|||_{(-s),p,\mathbb{R}^n} \leq \\
 & \qquad  |||\eta_{\ell}\left(A(x,D) - \lambda\,I_N\right)u|||_{(-s),p,\mathbb{R}^n}   +
    C|\lambda|^{-1/m_1}|||u|||_{(t),p,Q_{\ell}},
  \end{aligned}
\]
and hence  that
\begin{equation} \label{E:3.4}
\begin{aligned}
 &\sum_{\ell\geq1}||\left(A(x,D) - \lambda\,I_N\right)\eta_{\ell}u|||_{(-s),p,\mathbb{R}^n}^p \\
 &  \qquad \leq  2^p\sum_{\ell\geq 1}|||\eta_{\ell}\left(A(x,D) -
 \lambda\,I_N\right)u|||_{(-s),p,\mathbb{R}^n}^p  +
    2^pC^p|\lambda|^{-p/m_1}\sum_{\ell\geq1}|||u|||_{(t),p,Q_{\ell}}^p.
\end{aligned}
\end{equation}

We are now going to use \eqref{E:3.2} and \eqref{E:3.4} to complete the proof of the proposition. To this end,
some further preparation is required. Accordingly, let us recall that we have so far equipped the space
$W^{s_j}_{p^{\prime}}(\mathbb{R}^n), 1 \leq  j \leq N$, with the parameter dependent norm $|||\cdot|||_{s_j,p^{\prime},\mathbb{R}^n}$. For our purposes
it will be convenient now to introduce an equivalent norm, namely the norm $|||\cdot|||_{s_j,p^{\prime},\mathbb{R}^n}^{\prime}$ defined by
\begin{equation*}
 |||v|||^{\prime}_{s_j,p^{\prime},\mathbb{R}^n} =  \\
 \left(\Vert\,v\,\Vert^{p^{\prime}}_{s_j,p^{\prime},\mathbb{}{R}^n} + \bigl[(1 +
 |\lambda|^{s_j/m_j})^{p^{\prime}}-1\bigr]\Vert\,v\,\Vert^{p^{\prime}}_{0,p^{\prime},\mathbb{R}^n}\right)^{1/p^{\prime}}
\end{equation*}
for $v \in W^{s_j}_{p^{\prime}}(\mathbb{R}^n)$.
Note that by the equivalence of these two norms we mean
that there are constants $C_1$ and $C_2$, not depending upon $v$ and $\lambda$, such that
$|||v|||_{s_j,p^{\prime},\mathbb{R}^n} \leq C_1|||v|||_{s_j,p^{\prime},\mathbb{R}^n}^{\prime}$ and
$|||v|||^{\prime}_{s_j,p^{\prime},\mathbb{R}^n} \leq C_2|||v|||_{s_j,p^{\prime},\mathbb{R}^n}$.

Supposing now that $\lambda$ is fixed and
$W^{s_j}_{p^{\prime}}(\mathbb{R}^n)$ is equipped with the norm
$|||\cdot|||^{\prime}_{s_j,p^{\prime},\mathbb{R}^n}$, let $\nu_j$
denote the number of distinct  multi-indices $\alpha$ satisfying
$0 \leq |\alpha| \leq s_j$. Then to each $v \in
W^{s_j}_{p^{\prime}}(\mathbb{R}^n)$ we can associate the vector
$\mathcal{P}_{j,p^{\prime}}v \in L_{p^{\prime}}(\Omega)^{\nu_j}$,
where $\mathcal{P}_{j,p^{\prime}}v$ denotes the $\nu_j$ -- vector
with components
$\bigl\{\,\Lambda_{j,\alpha}D^{\alpha}\,\bigr\}_{|\alpha| \leq
s_j}$, where $\Lambda_{j,\alpha} = 1$ if $|\alpha| \geq 1$ and
equals $1 + |\lambda|^{s_j/m_j}$ otherwise, and where the
components of $\mathcal{P}_{j,p^{\prime}}v$ are arranged so that
if $\Lambda_{j,\alpha}D^{\alpha}v$ is the $\ell$-th component of
this vector and $\Lambda_{j,\beta}D^{\beta}v$ the $(\ell+1)$-th
component, then  $|\alpha| \leq |\beta|$. Hence if we let
$\mathcal{H}_{j,p^{\prime}}$ denote the closed subspace of
$L_{p^{\prime}}(\Omega)^{\nu_j}$ spanned by the vectors
$\mathcal{P}_{j,p^{\prime}}v$ for $v \in
W^{s_j}_{p^{\prime}}(\mathbb{R}^n)$, then
$\mathcal{H}_{j,p^{\prime}}$ is isometrically isomorphic to
$W^{s_j}_{p^{\prime}}(\mathbb{R}^n)$.

Turning next to the space $W^{(s)}_{p^{\prime}}(\mathbb{R}^n)$, we
have so far equipped this space with the parameter dependent norm
$|||\cdot|||_{(s),p^{\prime},\mathbb{R}^n}$. Let us also equip
$W^{(s)}_{p^{\prime}}(\mathbb{R}^n)$ with the norm  \break
$|||\cdot|||_{(s),p^{\prime},\mathbb{R}^n}^{\prime}$, which is
equivalent to $|||\cdot|||_{(s),p^{\prime},\mathbb{R}^n}$, defined
by $|||v|||^{\prime\,p^{\prime}}_{(s),p^{\prime}\,\mathbb{R}^n} =
$ \break $\sum_{j=
1}^N\left(|||v_j|||_{s_j,p^{\prime},\mathbb{R}^n}^{\prime}\right)^{p^{\prime}}$
for $v = (v_1,\ldots,v_N)^T \in
W^{(s)}_{p^{\prime}}(\mathbb{R}^n)$. Note that the norms  \break
$|||\cdot|||_{(s),p^{\prime},\mathbb{R}^n}$ and
$|||\cdot|||^{\prime}_{(s),p^{\prime}\mathbb{R}^n}$ induce
equivalent norms on the adjoint space of
$W^{(s)}_{p^{\prime}}(\mathbb{R}^n)$, $H^{(-s)}_p(\mathbb{R}^n)$,
and hence if we denote by
$|||\cdot|||^{\prime}_{(-s),p,\mathbb{R}^n}$ the norm induced on
$H^{(-s)}_p(\mathbb{R}^n)$ by the norm
$|||\cdot|||^{\prime}_{(s),p^{\prime},\mathbb{R}^n}$, then
$|||\cdot|||^{\prime}_{(-s),p,\mathbb{R}^n}$ and
$|||\cdot|||_{(-s),p,\mathbb{R}^n}$ are equivalent norms on this
space. Note also that when $W^{(s)}_{p^{\prime}}(\mathbb{R}^n)$ is
equipped with the norm
$|||\cdot|||^{\prime}_{(s),p^{\prime},\mathbb{R}^n}$, then
$W^{(s)}_{p^{\prime}}(\mathbb{R}^n)$ is isometrically isomorphic
to $\mathcal{H}_{p^{\prime}}$, the closed subspace of
$L_{p^{\prime}}(\mathbb{R}^n)^{\sum_{j=1}^N\nu_j}$ defined by
$\mathcal{H}_{p^{\prime}} =
\prod_{j=1}^N\mathcal{H}_{j,p^{\prime}}$. Hence it follows from
\cite[Theorem~3.8, p.~49]{Ad} that for each $u \in
H^{(-s)}_p(\mathbb{R}^n)$, there is a $f \in
L_p(\mathbb{R}^n)^{\sum_{j=1}^N\nu_j}$ such that
$|||u|||^{\prime}_{(-s),p,\mathbb{R}^n} = \Vert\,f\,\Vert$, where
$\Vert\cdot\Vert$ denotes the norm in
$L_p(\mathbb{R}^n)^{\sum_{j=1}^N\nu_j}$.

Fixing our attention upon \eqref{E:3.4} and referring to the proof
of Proposition~\ref{P:3.1} for notation, we have for $\ell \geq
1$,
\begin{equation*}
 \begin{aligned}
  |||\eta_{\ell}\left(A(x,D) - \lambda\,I_N\right)u|||_{(-s),p,\mathbb{R}^n} &\leq C|||\eta_{\ell}\left(A(x,D) - \lambda\,I_N\right)u|||^{\prime}_{(-s),p,\mathbb{R}^n} \\
  &= C\,\sup\,\bigl|\langle\,\left(A(x,D) - \lambda\,I_N\right)u,\eta_{\ell}\zeta\,\rangle_{\mathbb{R}^n}\bigr|  \\
  &= C\sup\,\bigl|\sum_{j=1}^N\sum_{k=1}^{\nu_j}\left(f_{j,k},\Lambda_{j,\alpha}D^{\alpha}\eta_{\ell}\zeta_j\right)_{\mathbb{R}^n}\bigr|
   \end{aligned}
\end{equation*}
for some $f \in L_p(\mathbb{R}^n)^{\sum_{j=1}^N\nu_j}$, where the
supremum is taken over the set $\bigl\{\,\zeta \in
C^{\infty}_0(\mathbb{R}^n)^N \,\bigl|\,  \break
|||\zeta|||^{\prime}_{0,p,\mathbb{R}^n} = 1\,\bigr\}$, and where
we have written $f = (f_1,\ldots,f_N)^T$,
 $f_j = (f_{j,1},\ldots,f_{j,\nu_j})^T$,  and $\Lambda_{j,\alpha}D^{\alpha}\eta_{\ell}\zeta_j$ denotes the $k$-th component
 of $\mathcal{P}_{j,p^{\prime}}\eta_{\ell}\zeta_j$.
 Hence it follows that
\begin{equation} \label{E:3.5}
\begin{aligned}
 \sum_{\ell \geq 1}|||\eta_{\ell}\left(A(x,D) - \lambda\,I_N\right)u|||^p_{(-s),p,\mathbb{R}^n} &\leq
 C^p\sum_{\ell \geq 1}\sum_{j=1}^N\sum_{k=1}^{\nu_j}\Vert\,f_{j,k}\,\Vert^p_{0,p,Q_{\ell}} \\
 &\leq C^{\prime}|||\left(A(x,D) - \lambda\,I_N\right)|||^p_{(-s),p,\mathbb{R}^n},
\end{aligned}
\end{equation}
where the constant $C^{\prime}$ has the same properties as the constant $C$.
Hence it follows from equations \eqref{E:3.2}, \eqref{E:3.4}, and \eqref{E:3.5} that we may choose $\lambda^{\prime}(p)$ so that the assertion of the proposition holds.
\hfill{$\square$}

\medskip

To terminate the work of this section, we now present a result
which will be used in the sequel. Here, for $\ell \in \mathbb{N}$,
we use the notation $(t+\ell)$ to denote the multi-index whose
entries are $t_j+\ell, j = 1,\ldots,N$. The multi-indices
$(-s+\ell), (-r_j+\ell-1/p)$ are defined analogously.
\begin{proposition} \label{P:3.6}
 Suppose that the spectral problem \eqref{E:1.1} is parameter-elliptic in $\mathcal{L}$ and
   that $\ell_0 \in \mathbb{N}$. Suppose also  that
   for each pair $j,k, 1 \leq j,k \leq N$,
 $a^{jk}_{\alpha} \in C^{s_j+\ell_0}(\overline{\mathbb{R}^n})$ for $|\alpha| \leq s_j+t_k$.
 Lastly suppose that $u \in W^{(t)}_p(\mathbb{R}^n), \lambda \in \mathcal{L} \setminus \{\,0\,\}$, and that $f$ is defined by
 \eqref{E:1.1}. Consequently,
 if $f \in H^{(-s+\ell_0)}_p(\mathbb{R}^n)$, then $u \in W^{(t+\ell_0)}_p(\mathbb{R}^n)$.
\end{proposition}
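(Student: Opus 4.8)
### Proof proposal for Proposition~\ref{P:3.6}

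The plan is to prove elliptic regularity by a bootstrap on a single derivative, using difference quotients together with the a priori estimate of Proposition~\ref{P:3.2} (or rather its analogue obtained by freezing coefficients and patching, as in Proposition~\ref{P:3.5}). Since the higher smoothness of the coefficients, $a^{jk}_\alpha \in C^{s_j+\ell_0}(\overline{\mathbb{R}^n})$, makes commutators with the localising functions and with difference quotients well behaved, the gain of one order of regularity can be iterated $\ell_0$ times. So it suffices to treat the case $\ell_0 = 1$: assuming $u \in W^{(t)}_p(\mathbb{R}^n)$ solves \eqref{E:1.1} with $f \in H^{(-s+1)}_p(\mathbb{R}^n)$, show that $u \in W^{(t+1)}_p(\mathbb{R}^n)$; the general case then follows by induction on $\ell_0$, noting at step $m$ that one needs $a^{jk}_\alpha \in C^{s_j+m}$, which is covered by the hypothesis $C^{s_j+\ell_0}$.

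First I would localise. Fix the covering $\{Q_\ell\}$, the cut-offs $\zeta_\ell, \chi_\ell$, and the partition of unity $\{\eta_\ell\}$ from the proof of Proposition~\ref{P:3.2}, with $d \le d_0$ chosen as there so that Proposition~\ref{P:3.5} applies on each $Q_\ell$. For a fixed unit vector $e_h$ ($h = 1,\dots,n$) and small $\delta \neq 0$, let $\Delta^\delta_h v(x) = \delta^{-1}(v(x+\delta e_h) - v(x))$ denote the difference quotient. Apply the estimate of Proposition~\ref{P:3.5} to $\Delta^\delta_h(\eta_\ell u)$ (which has support in a fixed dilate of $Q_\ell$ once $|\delta|$ is small, still of diameter $\le r_0$), for $\lambda \in \mathcal L$ with $|\lambda| \ge \lambda_2$:
\[
 |||\Delta^\delta_h(\eta_\ell u)|||_{(t),p,\mathbb{R}^n} \le C\,|||\bigl(A(x,D) - \lambda I_N\bigr)\Delta^\delta_h(\eta_\ell u)|||_{(-s),p,\mathbb{R}^n}.
\]
Now expand the right-hand side. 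Writing $[A(x,D),\Delta^\delta_h\eta_\ell]$ for the commutator, one has
\[
 \bigl(A(x,D) - \lambda I_N\bigr)\Delta^\delta_h(\eta_\ell u) = \Delta^\delta_h\bigl(\eta_\ell(A(x,D) - \lambda I_N)u\bigr) - \Delta^\delta_h[A(x,D),\eta_\ell]u + \bigl[\Delta^\delta_h\text{-}\text{commutator terms}\bigr].
\]
The first term on the right is $\Delta^\delta_h(\eta_\ell f)$ up to a commutator, and since $f \in H^{(-s+1)}_p$, the family $\{\Delta^\delta_h(\eta_\ell f)\}_\delta$ is bounded in $H^{(-s)}_p(\mathbb{R}^n)$ uniformly in $\delta$ and in $\lambda$ (with the parameter-dependent norms, using the standard property $\|\Delta^\delta_h g\|_{(-s),p} \le C\|g\|_{(-s+1),p}$, which follows from \cite[Proposition~2.2]{ADF} and \cite[Theorem~2.6.1, p.~198]{T}). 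The commutators $[A(x,D),\eta_\ell]$ and the action of $\Delta^\delta_h$ on the coefficients are handled exactly as the commutator estimate \eqref{E:3.3} in the proof of Proposition~\ref{P:3.2}: because $a^{jk}_\alpha \in C^{s_j+1}$, each such term is bounded, in the relevant parameter norm, by $C|\lambda|^{-1/m_1}|||u|||_{(t),p,Q_\ell}$ plus lower-order contributions already controlled by $|||u|||_{(t),p,Q_\ell}$. Summing the $p$-th powers over $\ell$ as in \eqref{E:3.4}–\eqref{E:3.5} and absorbing the $C|\lambda|^{-1/m_1}$ term into the left-hand side by taking $|\lambda|$ large, one obtains a bound
\[
 \sum_{\ell \ge 1}|||\Delta^\delta_h(\eta_\ell u)|||^p_{(t),p,\mathbb{R}^n} \le C\Bigl(|||f|||^p_{(-s+1),p,\mathbb{R}^n} + |||u|||^p_{(t),p,\mathbb{R}^n}\Bigr),
\]
with $C$ independent of $\delta$ and of $\lambda$. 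Reassembling as in the derivation of \eqref{E:3.2} gives $|||\Delta^\delta_h u|||_{(t),p,\mathbb{R}^n} \le C(\cdots)$ uniformly in $\delta$. Letting $\delta \to 0$ and using the characterisation of $W^{(t+1)}_p$ as the set of $W^{(t)}_p$ functions whose difference quotients in every coordinate direction are bounded in $W^{(t)}_p$, we conclude $u \in W^{(t+1)}_p(\mathbb{R}^n)$. This proves the case $\ell_0 = 1$, and the induction on $\ell_0$ completes the proof. Note that the conclusion requires no lower bound on $|\lambda|$ in the statement: the estimates above are used only for $|\lambda|$ large, but once regularity is known for one such $\lambda$, and since for $\lambda \in \mathcal L \setminus \{0\}$ the difference $(A(x,D) - \lambda I_N)u - (A(x,D) - \mu I_N)u = (\mu - \lambda)u \in W^{(t)}_p \subset H^{(-s+1)}_p$ (as $t_j \ge -s_j + 1$, i.e. $m_j \ge 1$), one transfers the regularity to every $\lambda \in \mathcal{L}\setminus\{0\}$ by rewriting the equation with a fixed large parameter and moving the term $(\mu-\lambda)u$ to the right-hand side.

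The main obstacle I expect is the bookkeeping of the commutator and difference-quotient terms in the parameter-dependent norms: one must verify that every term produced by moving $\Delta^\delta_h$ and $\eta_\ell$ past the operator $A(x,D) - \lambda I_N$ either carries a genuine factor $|\lambda|^{-1/m_1}$ (so it can be absorbed) or is of order strictly below $(t)$ on the input side and thus controlled by $|||u|||_{(t),p}$, and that all constants are uniform in $\delta$ and $\lambda$. This is the same mechanism that underlies \eqref{E:3.3}, but here the difference quotient hits the coefficients as well as $u$, which is precisely why one needs the extra derivative $C^{s_j+1}$ (resp.\ $C^{s_j+\ell_0}$ for the full induction) on the $a^{jk}_\alpha$; the hypothesis is tailored exactly to make the $\ell_0$-fold iteration go through.
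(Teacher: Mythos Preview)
Your plan is essentially the paper's own proof: localise with the partition of unity $\{\eta_\ell\}$ from Proposition~\ref{P:3.2}, use the a~priori estimate together with difference quotients to gain one derivative on each $\eta_\ell u$, reassemble, and induct on $\ell_0$; your transfer of the conclusion from large $|\lambda|$ to arbitrary $\lambda\in\mathcal{L}\setminus\{0\}$ is correct and is implicit in the paper's passage from \eqref{E:3.1} to \eqref{E:3.6}.

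Two points where you diverge from the paper and where your write-up is imprecise. First, the commutator in which $\Delta^\delta_h$ lands on the coefficients, namely $\sum_\alpha(\Delta^\delta_h a^{jk}_\alpha)\,\tau_\delta D^\alpha(\eta_\ell u_k)$, is of the \emph{same} order as $A_{jk}$ and carries no factor $|\lambda|^{-1/m_1}$; the hypothesis $a^{jk}_\alpha\in C^{s_j+1}$ only makes $\Delta^\delta_h a^{jk}_\alpha$ uniformly bounded in $C^{s_j}$, so this term is controlled by $C\,|||\eta_\ell u|||_{(t),p}$, a quantity already known. Hence your displayed bound $\sum_\ell|||\Delta^\delta_h(\eta_\ell u)|||^p_{(t),p}\le C(|||f|||^p_{(-s+1),p}+|||u|||^p_{(t),p})$ is correct, but there is nothing to ``absorb into the left-hand side'' and no need for $|\lambda|$ large at this particular step. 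Second, the paper organises the argument a bit differently: it first converts \eqref{E:3.1} into the ordinary-norm estimate \eqref{E:3.6} with a lower-order remainder $\Vert\eta_\ell u\Vert_{(t-1),p,Q_\ell}$, then invokes the classical difference-quotient lemma of Lions--Magenes/Agmon to obtain \eqref{E:3.7} directly, and finally reassembles. In that reassembly the paper treats separately the cases $s_j\ge 1$ (dual-space argument as in \eqref{E:3.5}) and $s_j=0$ (where $H^{-s_j+1}_p=W^1_p$ and one argues directly); you should expect the same case split when you justify $\sum_\ell\Vert\eta_\ell f\Vert^p_{(-s+1),p,Q_\ell}\le C\Vert f\Vert^p_{(-s+1),p,\mathbb{R}^n}$.
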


\begin{proof}
Referring to the proof of Proposition~\ref{P:3.2} for terminology,
it follows from \eqref{E:3.1} and from arguments similar to those
used in the proof of Proposition~\ref{P:3.1} that for $\ell \geq
1$,
\begin{equation} \label{E:3.6}
\begin{aligned}
\Vert\,\eta_{\ell}u\,\Vert_{(t),p,Q_{\ell}} & \leq
C\bigl(\Vert\,\bigl(A(x,D) -
\lambda\,I_N\bigr)\eta_{\ell}u\,\Vert_{(-s),p,Q_{\ell}}
\\
& \quad  +
\Vert\,\eta_{\ell}u\,\Vert_{(t-1),p,Q_{\ell}}\bigr),
\end{aligned}
\end{equation}
where here and for the remainder of this proof, $C$ denotes a generic constant which may vary from inequality to inequality,
but in all cases  it does not depend upon $\ell, u$,
 and $\lambda$. By employing arguments similar to those used in the proof of Proposition~\ref{P:3.1},
it is not difficult to verify that $\left(A(x,D) -
\lambda\,I_N\right)\eta_{\ell}u \in H^{(-s+1)}_p(Q_{\ell})$. Hence
for $\ell \geq  1$ we can apply the differential quotient method
to \eqref{E:3.6} as in \cite[Proof of Theorem~3.1, p.~123]{LM}
(see also \cite{Ag}) to deduce that $\eta_{\ell}u \in
W^{(t+1)}_p(Q_{\ell})$ and
\begin{equation} \label{E:3.7}
\Vert\,\eta_{\ell}u\,\Vert_{(t+1),p,Q_{\ell}} \leq C\left(\Vert\,\bigl(A(x,D) - \lambda\,I_N\bigr)\eta_{\ell}u\,\Vert_{(-s+1),p,Q_{\ell}}
+ \Vert\,\eta_{\ell}u\,\Vert_{(t),p,Q_{\ell}}\right).
\end{equation}

We conclude from the foregoing results and from arguments similar to those used in the proof of Proposition \ref{P:3.2}
that $u \in W^{(t+1)}_p(B(r))$ for every $r > 0$ where $B(r) = \bigl\{x \in \mathbb{R}^n\,\bigl|
|x| < r\,\bigr\}$. Let us now show that $u \in W^{(t+1)}_p(\mathbb{R}^n)$ and that
\begin{equation} \label{E:3.8}
 \begin{aligned}
  \Vert\,u\,\Vert_{(t+1),p,\mathbb{R}^n} & \leq C\bigl(\Vert\,\bigl(A(x,D) - \lambda\,I_N\bigr)u\,
  \Vert_{(-s+1),p,\mathbb{R}^n}
  \\
  & \quad +
  \Vert\,u\,\Vert_{(t),p,\mathbb{R}^n}\bigr).
   \end{aligned}
\end{equation}
Accordingly, it is clear that
\begin{equation*}
\Vert\,u\,\Vert_{(t+1),p,\mathbb{R}^n} \leq
N^{1/p^{\prime}}\left(\sum_{j=1}^N\Vert\,\sum_{\ell \geq
1}\eta_{\ell}u_j\,\Vert^p_{t_j+1,p,\mathbb{R}^n}\right)^{1/p},
\end{equation*}
which leads to the inequality
\begin{equation} \label{E:3.9}
\Vert\,u\,\Vert_{(t+1),p,\mathbb{R}^n} \leq
2^{n+1}N\left(\sum_{\ell \geq 1}\Vert\,\eta_{\ell}u_j\,\Vert^p_{(t+1),p,Q_{\ell}}\right)^{1/p}.
\end{equation}
Furthermore, by appealing to \eqref{E:3.7} and by arguing in a manner similar to that in the proof of Proposition \ref{P:3.2}, we can also show that
\begin{equation} \label{E:3.10}
\begin{aligned}
 \sum_{\ell \geq 1}\Vert\,\eta_{\ell}u\,\Vert^p_{(t+1),p,Q_{\ell}} &\leq
 C\sum_{\ell \geq 1}\left(\Vert\,\eta_{\ell}\bigl(A(x,D) - \lambda\,I_N\bigr)u\,\Vert^p_{(-s+1),p,Q_{\ell}}  \right.
 \\
 & \quad + \left. \Vert\,u\,\Vert^p_{(t),p,Q_{\ell}}\right).
\end{aligned}
\end{equation}

When $s_N \geq 1$, then arguments similar to those used in the
proof of Proposition~\ref{P:3.2} show that the expression on the
right side of \eqref{E:3.10} is majorized by
\[
 C^{\prime}\left(\Vert\,\bigl(A(x,D) - \lambda\,I_N\bigr)u\,\Vert^p_{(-s+1),p,\mathbb{R}^n} + \Vert\,u\,\Vert^p_{(t),p,\mathbb{R}^n}\right),
\]
where the constant $C^{\prime}$ has the same properties as $C$,
and hence it follows from \eqref{E:3.7}, \eqref{E:3.9},  and \eqref{E:3.10} that $u \in W^{(t+1)}_p(\mathbb{R}^n)$ and that the
inequality \eqref{E:3.8} holds.

Suppose next that for some $r, 1 < r \leq d$, $s_j = 0$ for $j > k_{r-1}$ and $s_j > 0$ for $j \leq k_{r-1}$.
Then
\[
H^{(-s+1)}_p(\mathbb{R}^n) = H^{(-s+1)_1}_p(\mathbb{R}^n) \times
W^1_p(\mathbb{R}^n)^{N-k_{r-1}},
\]
where $H^{(-s+1)_1}_p(\mathbb{R}^n) =
\prod_{j=1}^{k_{r-1}}H^{-s_j+1}_p(\mathbb{R}^n)$, and where now
$H^{(-s+1)}_p(\mathbb{R}^n), H^{(-s+1)_1}_p(\mathbb{}{R}^n)$ and
its adjoint space $W^{(s-1)_1}_{p^{\prime}}(\mathbb{R}^n) =
\prod_{j=1}^{k_{r-1}}W^{s_j-1}_{p^{\prime}}(\mathbb{R}^n)$, and
$W^{1}_p(\mathbb{R}^n)^{N-k_{r-1}}$ are  equipped with their
ordinary norms, $\Vert\,\cdot\,\Vert_{(-s+1),p,\mathbb{R}^n},
\Vert\,\cdot\,\Vert_{(-s+1)_1,p,\mathbb{R}^n},
\Vert\,\cdot\,\Vert_{(s-1)_1,p^{\prime},\mathbb{R}^n}$, and \break
$\Vert\,\cdot\,\Vert_{(1)_1,p,\mathbb{R}^n}$, respectively, which
are defined in a manner analogous to the way $W^{(\tau)}_p(G)$ and
$H^{(-s)}_p(G)$ were defined at the beginning of this section. For
our purposes it will be convenient to impose equivalent norms on
$W^{(s-1)_1}_{p^{\prime}}(\mathbb{R}^n)$ and
$W^{1}_p(\mathbb{R}^n)^{N-k_{r-1}}$ namely  \break
$\Vert\,\cdot\,\Vert^{\prime}_{(s-1)_1,p^{\prime},\mathbb{R}^n}$
and $\Vert\,\cdot\,\Vert^{\prime}_{(1)_1,p,\mathbb{R}^n}$,
respectively, where
$$\Vert\,u\,\Vert^{\prime}_{(s-1)_1,p^{\prime},\mathbb{R}^n}= 
\left(\sum_{j=1}^{k_{r-1}}\Vert\,u_j\,\Vert^p_{s_j-1,p^{\prime},\mathbb{R}^n}\right)^{1/p}$$
for $u \in W^{(s-1)_1}_{p^{\prime}}(\mathbb{R}^n)$ and
$$\Vert\,u\,\Vert^{\prime}_{(1)_1,p,\mathbb{R}^n} = 
\left(\sum_{j=k_{r-1}+1}^N\Vert\,u_j\,\Vert^p_{1,p,\mathbb{R}^n}\right)^{1/p}$$
for $u \in W^1_p(\mathbb{R}^n)^{N-k_{r-1}}$. Note that the norms
 $\Vert\,\cdot\,\Vert_{(s-1)_1,p^{\prime},\mathbb{R}^n}$ and
$\Vert\,\cdot\,\Vert^{\prime}_{(s-1)_1,p^{\prime},\mathbb{R}^n}$
induce equivalent norms on the adjoint space of
$W^{(s-1)_1}_{p^{\prime}}(\mathbb{R}^n)$,
$H^{(-s+1)_1}_p(\mathbb{R}^n)$, and hence if we denote by
$\Vert\,\cdot\,\Vert^{\prime}_{(-s+1)_1,p,\mathbb{R}^n}$ the norm
induced on \break   $H^{(-s+1)_1}_p(\mathbb{R}^n)$ by the norm
$\Vert\,\cdot\,\Vert^{\prime}_{(s-1)_1,p^{\prime},\mathbb{R}^n}$,
then $\Vert\,\cdot\,\Vert^{\prime}_{(-s+1)_1,p,\mathbb{R}^n}$ and
$\Vert\,\cdot\,\Vert_{(-s+1)_1,p,\mathbb{R}^n}$ are equivalent
norms on this space.

Let us now fix our attention upon $W^{(s-1)_1}_{p^{\prime}}(\mathbb{R}^n)$ and for $1 \leq j \leq k_{r-1}$
let $\tilde{\nu}_j$
denote the number of distinct multi-indices $\alpha$ for which $|\alpha| \leq s_j-1$.
Then to each $v = (v_1\ldots,v_{k_{r-1}})^T
 \in W^{(s-1)_1}_{p^{\prime}}(\mathbb{R}^n)$ we can associate the vector $$\mathcal{P}^{(1)}_{p^{\prime}}v
 =
 \left(\mathcal{P}^{(1)}_{1,p^{\prime}}v_1,\ldots,\mathcal{P}^{(1)}_{k_{r-1},p^{\prime}}v_{k_{r-1}} \right)
 \in L_{p^{\prime}}(\Omega)^{\sum_{j=1}^{k_{r-1}}\tilde{\nu}_j},$$ where for $1 \leq j \leq k_{r-1},
 \mathcal{P}^{(1)}_{j,p^{\prime}}v$
 denotes the $\tilde{\nu}_j$-th vector with components
 $\{\,D^{\alpha}v_j\,\}_{|\alpha| \leq s_j-1}$, and where the components of
  $\mathcal{P}^{(1)}_{j,p^{\prime}}$ are arranged in the same way as the entries of $\mathcal{P}_{j,p^{\prime}}$
  were arranged
 in the proof of Proposition~\ref{P:3.2}. Hence if we let $\mathcal{H}^{(1)}_{p^{\prime}}$ denote the closed subspace of
 $L_{p^{\prime}}^{\sum_{j=1}^{k_{r-1}}\tilde{\nu}_j}(\mathbb{R}^n)$ spanned by the vectors $\mathcal{P}^{(1)}_{p^{\prime}}v$ for
 $v \in W^{(s-1)_1}_{p^{\prime}}(\mathbb{R}^n)$ and if we equip $W^{(s-1)_1}_{p^{\prime}}(\mathbb{R}^n)$
 with the norm $\Vert\,\cdot\,\Vert_{(s-1)_1,p^{\prime},\mathbb{R}^n}^{\prime}$, then $\mathcal{H}^{(1)}_{p^{\prime}}$ is isometrically isomorphic to
 $W^{(s-1)_1}_{p^{\prime}}(\mathbb{R}^n)$. Hence it follows, as in the proof of Proposition~\ref{P:3.2}, that for each
 $u \in \mathcal{H}^{(-s+1)_1}_p(\mathbb{R}^n)$ there is an $f^{(1)}= \left(f^{(1)}_1,\ldots,f^{(1)}_{k_{r-1}}\right) \in L_p(\mathbb{R}^n)^{\sum_{j=1}^{k_{r-1}}\tilde{\nu}_j}$
 where $f^{(1)}_j = \left(f^{(1)}_{j,1},\ldots,f^{(1)}_{j,\tilde{\nu_j}}\right)$
 such that
  \begin{equation} \label{E:3.11}
 \Vert\,u\,\Vert^{\prime}_{(-s+1)_1,p,\mathbb{R}^n} = \Vert\,f^{(1)}\,\Vert_{\mathbb{R}^n},
 \end{equation}
 where $\Vert\,\cdot\,\Vert_{\mathbb{R}^n}$ denotes the norm in $L_p(\mathbb{R}^n)^{\sum_{j=1}^{k_{r-1}}\tilde{\nu}_j}$. Consequently if we
 fix our attention again upon \eqref{E:3.10} and let $P^{(1)}_p$ denote the operator projecting $H^{(-s+1)}_p(\mathbb{R}^n)$ onto $H^{(-s+1)_1}_p(\mathbb{R}^n)$ along
$W^1_p(\mathbb{R}^n)^{N-k_{r-1}}$, then there is an $f^{(1)} \in L_p(\mathbb{R}^n)^{\sum_{j=1}^{k_{r-1}}\tilde{\nu}_j}$ satisfying \eqref{E:3.11} with
$u$ there replaced by $P^{(1)}_p\bigl(A(x,D)- \lambda\,I_N\bigr)u$ such that for $\ell \geq 1$ we have
\begin{equation*}
 \begin{aligned}
   \Vert\,P^{(1)}_p\eta_{\ell}\left(A(x,D) - \lambda\,I_N\right)u\,&\Vert_{(-s+1)_1,p,Q_{\ell}}
   \leq \Vert\,P^{(1)}_p\eta_{\ell}\left(A(x,D) -
  \lambda\,I_N\right)u\,\Vert_{(-s+1)_1,p,\mathbb{R}^n}
  \\
  & \leq C\Vert\,P^{(1)}_p\eta_{\ell}\left(A(x,D) - \lambda\,I_N\right)u\,\Vert^{\prime}_{(-s+1)_1,p,\mathbb{R}^n} \\
  &  = C\,\textrm{sup}\,\bigl|\langle\,P^{(1)}_p\eta_{\ell}\left(A(x,D) - \lambda\,I_N\right)u,\zeta\,\rangle_{\mathbb{R}^n}\bigr|  \\
  & = C\,\textrm{sup}\,\bigl|\sum_{j=1}^{k_{r-1}}\sum_{k=1}^{\tilde{\nu}_j}\left(f^{(1)}_{j,k},D^{\alpha}\eta_{\ell}
  \zeta_j\right)_{\mathbb{R}^n}\bigr|
\leq C_1\Vert\,f^{(1)}\,\Vert_{Q_{\ell}},
   \end{aligned}
\end{equation*}
where in each case the supremum is over the set $\bigl\{\,\zeta
\in
C^{\infty}_0(\mathbb{R}^n)^{k_{r-1}}\,\bigl|\,\Vert\,\zeta\,\Vert_{(s-1)_1,p^{\prime},\mathbb{R}^n}
=1\,\bigr\}$, the constant $C_1$ has the same properties as the
constant $C$, $f^{(1)} \in
L_p(\mathbb{R}^n)^{\sum_{j=1}^{k_{r-1}}\tilde{\nu}_j}$ denotes the
norm preserving extension of $P^{(1)}_p\left(A(x,D) -
\lambda\,I_N\right)u$ to a linear functional over
$L_{p^{\prime}}(\mathbb{R}^n)^{\sum_{j=1}^{k_{r-1}}\tilde{\nu}_j}$,
and the terminology used here is analogous to that used in the
proof of Proposition \ref{P:3.2}. If we now let $f^{(2)} =
\left(f^{(2)}_{k_{r-1}+1},\ldots,f^{(2)}_N\right) \in
L_p(\mathbb{R}^n)^{(N-k_{r-1})(n+1)}$, where $f^{(2)}_j =
\left(f^{(2)}_{j,1},\ldots,f^{(2)}_{j,n+1}\right)$ and
$f^{(2)}_{j,k} = D_{k-1}u_j$, with $D_0u_j = u_j$, then we
conclude from the foregoing results that
 \begin{align}
   \sum_{\ell\geq1}\Vert\,\eta_{\ell}\bigl(A(x,D) - &\lambda\,I_N\,\bigr)u\,\Vert^p_{(-s+1),p,Q_{\ell}}   \notag
   \\
   & \leq C\sum_{\ell\geq1}\left(\sum_{j=1}^{k_{r-1}}\sum_{k=1}^{\tilde{\nu}_j}\Vert\,f^{(1)}_{j,k}\,\Vert^p_{0,p,Q_{\ell}} +
  \sum_{j=k_{r-1}+1}^N\sum_{k=1}^{n+1}\Vert\,f^{(2)}_{j,k}\,\Vert^p_{0,p,Q_{\ell}}\right) \notag   \\
    & \leq C^{\prime}\Vert\,\left(A(x,D) - \lambda\,I_N\right)u\,\Vert^p_{(-s+1),p,\mathbb{R}^n}, \notag
   \end{align}
where the constant $C^{\prime}$ has the same properties as the constant $C$. In light of these last inequalities,
 \eqref{E:3.9},  and \eqref{E:3.10}, we conclude that $u \in W^{(t+1)}_p(\mathbb{R}^n)$ and that the inequality \eqref{E:3.1} holds
 with $t$ and $-s$ replaced by $t+1$ and $-s+1$, respectively.

 Suppose next that $s_j = 0$ for $j = 1,\ldots,N$. Then $H^{(-s+1)}_p(\mathbb{R}^n) = W^1_p(\mathbb{R}^n)^N$, and for this case the proposition can be proved
 by arguing with $W^1_p(\mathbb{R}^n)^N$ as we argued with $W^1_p(\mathbb{R}^n)^{N-k_{r-1}}$ in the previous case.

 If $\ell_0 = 1$, then the proof of the proposition is complete. Otherwise we complete the proof by proceeding by induction.
\end{proof}

\section{Fredholm theory}  \label{S:4}
In this section we are going to use the results of
Section~\ref{S:3} to derive information pertaining to the Fredholm
theory for the Banach space operators induced by the spectral
problem \eqref{E:1.1}. Furthermore, when in the sequel we refer to
$W^{(\tau)}_p(G), \tau = t \; \textrm{or} \;s$ and $H^{(-s)}_p(G)$
as Banach spaces (see Section~\ref{S:3} for terminology), then it
is to be understood that we are equipping these spaces with their
ordinary norms. If $X$ and $Y$ are Banach spaces, then we shall
also use the notation $L\left(X,Y\right)$ to denote the space of
bounded linear operators mapping $X$ into $Y$ and equipped with
its usual norm.

 Next  let      $A_p$ denote the operator on $H^{(-s)}_p(\mathbb{R}^n)$ that acts like $A(x,D)$
 and has domain $D(A_p) =  W^{(t)}_p(\mathbb{R}^n)$.

We note for later use that that if we suppose that the hypotheses
of Proposition~\ref{P:3.2} hold, then
\[
 \Vert\,u\,\Vert_{(t),p,\mathbb{R}^n} \leq C\left(\Vert\,A_p\,u\,\Vert_{(-s),p,\mathbb{R}^n} +
 \Vert\,u\,\Vert_{(-s),p,\mathbb{R}^n}\right) \quad \textrm{for} \quad u \in D(A_p),
\]
where the constant  $C$ does not depend upon $u$. Thus we conclude that the operator
$A_p: D(A_p) \to H^{(-s)}_p(\mathbb{R}^n)$ is closed. Note also
  from Proposition~\ref{P:3.1} that  $A_p: D(A_p) \to H^{(-s)}_p(\mathbb{R}^n)$ is bounded.

Referring to \cite[pp.~242--243]{Ka} for terminology, we now have
the following result.
\begin{theorem} \label{T:4.1}
 Suppose that the spectral problem \eqref{E:1.1} is  weakly smooth
 and
 parameter-elliptic in $\mathcal{L}$.
 Then there exists
 the number $\lambda^{\dagger} \in \mathbb{R}_+$,  where $\lambda^{\dagger}$ does not depend upon $p$,
 such that
 $A_p - \lambda\,I_N \in L\left(D(A_p),
   H^{(-s)}_p(\mathbb{R}^n)\right)$ and is Fredholm
for   $\lambda \in \mathcal{L}^{\dagger} =
  \bigl\{\,\lambda \in \mathcal{L} \bigl| \, |\lambda| \geq \lambda^{\dagger}\,\bigr\}$. Furthermore, there exists the number $\lambda^0(p) \geq \lambda^{\dagger}$ such that
      $\lambda$ belongs to the resolvent set of $A_p$ for $\lambda \in \mathcal{L}^{\dagger}_p = \bigl\{\,\lambda \in \mathcal{L}^{\dagger}\,\bigl|
   |\lambda| \geq \lambda^0(p)\,\bigr\}$, and hence    index\,$(A_p - \lambda\,I_N) = 0$ for $\lambda \in \mathcal{L}^{\dagger}$.
   Finally, that part of the spectrum of $A_p$ which is contained in $\mathcal{L}^{\dagger}$ consists solely of most a finite number of distinct eigenvalues,
   with each being of finite algebraic multiplicity.
      \end{theorem}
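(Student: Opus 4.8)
The plan is to deduce Theorem~\ref{T:4.1} from the a priori estimates of Section~\ref{S:3} together with a perturbation argument exploiting the weak-smoothness hypothesis. First I would prove the Fredholm property of $A_p - \lambda I_N$ on $\mathcal{L}^\dagger$ by a "freezing plus decay" argument. Write $A(x,D) = A^\infty(D) + B(x,D)$, where $A^\infty(D)$ is a constant-coefficient operator obtained by taking a suitable limit of the coefficients as $|x|\to\infty$ (for the principal part this limit exists by weak smoothness, since $a^{jk}_\alpha \in C^{0,0}(\overline{\mathbb{R}^n})$ for $|\alpha|\le t_k$, and analogously for the lower-order terms); the remainder $B(x,D)$ then has coefficients whose relevant parts decay at infinity. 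Proposition~\ref{P:3.3}, applied to the constant-coefficient system $A^\infty(D)$ (which is itself parameter-elliptic in $\mathcal{L}$), shows that $A^\infty_p - \lambda I_N$ is boundedly invertible from $W^{(t)}_p(\mathbb{R}^n)$ onto $H^{(-s)}_p(\mathbb{R}^n)$ for $|\lambda|$ large. It then remains to show that $B(x,D)(A^\infty_p - \lambda I_N)^{-1}$ is compact on $H^{(-s)}_p(\mathbb{R}^n)$: combining a local elliptic estimate (Proposition~\ref{P:3.5}, or rather its proof) with the Rellich--Kondrachov theorem on balls handles the local compactness, while the decay of the coefficients of $B$ at infinity, together with the uniform estimate \eqref{E:3.1}, gives the needed smallness of the tails — a cutoff-and-approximate argument then yields norm-approximability of $B(x,D)(A^\infty_p-\lambda I_N)^{-1}$ by compact operators, hence its compactness. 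Consequently $A_p - \lambda I_N = (A^\infty_p - \lambda I_N)\bigl(I + (A^\infty_p-\lambda I_N)^{-1}B(x,D)\bigr)$ is Fredholm for $\lambda \in \mathcal{L}^\dagger$ with $\lambda^\dagger$ chosen large enough, and since $A^\infty_p$ does not involve $p$ in its symbol, $\lambda^\dagger$ can be taken independent of $p$.

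Next I would pin down the resolvent statement. By Proposition~\ref{P:3.3} there is $\lambda^0(p)$ such that for $\lambda \in \mathcal{L}$ with $|\lambda|\ge \lambda^0(p)$ the problem \eqref{E:1.1} has a unique solution $u \in W^{(t)}_p(\mathbb{R}^n)$ for every $f \in H^{(-s)}_p(\mathbb{R}^n)$, with the estimate $|||u|||_{(t),p,\mathbb{R}^n} \le C|||f|||_{(-s),p,\mathbb{R}^n}$. Enlarging $\lambda^0(p)$ if necessary so that $\lambda^0(p)\ge\lambda^\dagger$, this says exactly that $A_p - \lambda I_N$ is bijective from $D(A_p)$ onto $H^{(-s)}_p(\mathbb{R}^n)$ with bounded inverse, i.e. $\lambda$ lies in the resolvent set of $A_p$ for $\lambda \in \mathcal{L}^\dagger_p$. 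In particular $\mathrm{index}(A_p - \lambda I_N) = 0$ at all such $\lambda$; since the index is locally constant on the connected Fredholm set and $\mathcal{L}^\dagger$ is connected (it is a closed sector minus a disc, which is connected as $\mathcal{L}$ is a sector of positive opening — or, in the degenerate cases, still a connected arc), $\mathrm{index}(A_p - \lambda I_N) = 0$ for every $\lambda \in \mathcal{L}^\dagger$.

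Finally I would establish the spectral statement. The family $\lambda \mapsto A_p - \lambda I_N$ is a holomorphic (indeed affine) family of Fredholm operators of index zero on the connected open set containing $\mathcal{L}^\dagger$, and by the previous paragraph it is invertible at every point of the nonempty subset $\mathcal{L}^\dagger_p$. By the analytic Fredholm theorem (in the form for operator pencils; see \cite[pp.~242--243]{Ka}), the set of $\lambda \in \mathcal{L}^\dagger$ for which $A_p - \lambda I_N$ fails to be invertible is a discrete set with no accumulation point in $\mathcal{L}^\dagger$, and at each such point the inverse has a pole, so that $\lambda$ is an eigenvalue of $A_p$ of finite algebraic multiplicity; moreover this bad set is contained in the compact set $\{\lambda \in \mathcal{L}^\dagger : \lambda^\dagger \le |\lambda| \le \lambda^0(p)\}$ (outside of which $\lambda$ is in the resolvent set), hence is finite. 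This gives the asserted description of $\sigma(A_p)\cap\mathcal{L}^\dagger$.

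The main obstacle is the compactness of $B(x,D)(A^\infty_p - \lambda I_N)^{-1}$ on the negative-order space $H^{(-s)}_p(\mathbb{R}^n)$: one must set up the splitting $A = A^\infty + B$ carefully so that the coefficients of $B$ genuinely have the decay needed — this is precisely where the space $C^{0,0}(\overline{\mathbb{R}^n})$ and $C^{\ell,0}(\overline{\mathbb{R}^n})$ of Definition~\ref{D:2.5} enter — and then combine the local Rellich compactness with a uniform tail estimate, keeping track of the duality/pairing description of $H^{(-s)}_p$ used throughout Section~\ref{S:3}. All the hard analytic estimates needed for this are already available in Propositions~\ref{P:3.1}--\ref{P:3.5}; the work is in the bookkeeping of the perturbation and cutoff arguments rather than in new estimates.
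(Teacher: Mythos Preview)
Your compact-perturbation strategy for the Fredholm property has a genuine gap: the limit operator $A^\infty(D)$ need not exist. Membership in $C^{0,0}(\overline{\mathbb{R}^n})$ means only that the local oscillation $\omega_\phi(x)=\sup_{|y-x|\le 1}|\phi(x)-\phi(y)|$ tends to zero as $|x|\to\infty$; it does \emph{not} force $\phi(x)$ to converge. For instance $\phi(x)=\sin\bigl(\log\log(e^e+|x|)\bigr)$ lies in $C^{0,0}(\overline{\mathbb{R}^n})$ (indeed in every $C^{\ell,0}(\overline{\mathbb{R}^n})$) but oscillates forever, so no single constant-coefficient $A^\infty$ arises. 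Consequently the decomposition $A=A^\infty+B$ with $B$ having coefficients that ``decay at infinity'' is not available under the hypotheses of Definition~\ref{D:2.5}, and the compactness of $B(x,D)(A^\infty_p-\lambda I_N)^{-1}$ cannot even be formulated. The paper avoids this entirely: it verifies a uniform lower bound $|\det(A(x,\xi)-\lambda I_N)|\ge (C_0/2)\prod_j\langle\xi,\lambda\rangle_j^{m_j}$ for $|\lambda|\ge\lambda^\dagger$ (by comparing $A(x,\xi)$ with its principal part $\mathring{A}(x,\xi)$ and using the estimate from the proof of Proposition~\ref{P:3.4}), and then invokes Rabier~\cite{R}, whose Fredholm criterion is designed precisely for Douglis--Nirenberg systems on $\mathbb{R}^n$ with coefficients in the $C^{\ell,0}$ classes and does not rely on any single limit at infinity.

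Your treatment of the remaining assertions is essentially correct and close to the paper's. The resolvent statement and the index-zero conclusion via connectedness of $\mathcal{L}^\dagger$ match the paper. For the final spectral assertion you appeal to the analytic Fredholm alternative, which is cleaner than the paper's polygonal-arc argument based on \cite[Theorem~5.31]{Ka}; just note that you must apply it on an open connected neighbourhood of $\mathcal{L}^\dagger$ in the Fredholm domain (which exists since the Fredholm set is open), rather than on $\mathcal{L}^\dagger$ itself.
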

\begin{proof}
We know from the proof of Proposition~\ref{P:3.4} that for $x,\xi
\in \mathbb{R}^n$ and $\lambda \in \mathcal{L}$ with $|\lambda|
\geq \lambda_0$,
\[
 |\textrm{det}\,\left(\textrm{\r{A}}(x,\xi) - \lambda\,I_N\right)| \geq C_0\prod_{j=1}^N\langle\,\xi,\lambda\,\rangle_j^{m_j},
\]
where the constant $C_0$ does not depend upon $x, \xi,$ and $\lambda$. Furthermore, it is not difficult to verify that for these values of
$x, \xi,$ and $\lambda$,
\[
 |\textrm{det}\left(\textrm{\r{A}}(x,\xi) - \lambda\,I_N\right) - \textrm{det}\left(A(x,\xi) - \lambda\,I_N\right)| \leq C|\lambda|^{-1/m_1}
 \prod_{j=1}^N\langle\,\xi,\lambda\,\rangle_j^{m_j},
\]
where the constant $C$ does not depend upon $x, \xi,$ and $\lambda$. Hence it follows that we can choose the number $\lambda^{\dagger} \in \mathbb{R}_+$ such that
\[
 |\textrm{det}\left(A(x,\xi) - \lambda\,I_N\right)| \geq C_0/2\prod_{j=1}^N\langle\,\xi,\lambda\,\rangle_j^{m_j}
\]
for $x,\xi \in \mathbb{R}^n$ and $\lambda \in \mathcal{L}$ with
$|\lambda| \geq \lambda^{\dagger}$. Thus all but the final
assertions of the proposition now follow from this last result,
\cite{R}, Proposition  \ref{P:3.3}, from what was said in the text
preceding the statement of this theorem, and from the fact that
$\mathcal{L}^{\dagger}$ is contained in a component of the
Fredholm domain of $A_p$ (see \cite[pp.~242--243]{Ka}).

Turning now to the final assertions of
the proposition,
  let $\lambda_1, \lambda_2
\in \mathcal{L}^{\dagger}$ with $|\lambda_1| \geq \lambda^0(p)$ and $|\lambda_2| < \lambda^0(p)$.
Then there is a polygonal arc $\gamma = \bigl\{\,\gamma(t), 0 \leq t \leq 1\,\bigr\}$
joining $\lambda_1$ to $\lambda_2$ and
 lying entirely in $\mathcal{L}^{\dagger}$ such that $\gamma(0) = \lambda_1$ and $\gamma(1) = \lambda_2$.
  But this implies that either $N(t) = \textrm{dim\,ker}\,\left(A_p - \gamma(t)\,I_N\right) = 0$  for $0 \leq t < 1$ or there is a $\tau_1, 0 < \tau_1 < 1$ such that
 $N(t) = 0$ for $0 \leq t < \tau_1, N(\tau_1) > 0$.
However for either case we know from \cite[Theorem~5.3.1,
p.~241]{Ka} that for some $\epsilon > 0$, dim\,ker$(A_p -
\lambda\,I_N) = 0$ for $0 < |\lambda - \lambda_2| < \epsilon$ if
the first case occurs and for $0 < |\lambda - \gamma(\tau_1)|
 < \epsilon$ if the second case occurs. Furthermore, if the second case occurs, then either $N(t) = 0$ for $0 \leq t < 1$ except for $t = \tau_1$,
 or there is a $\tau_2, 0 < \tau_1 < \tau_2 < 1$, such that $N(t) = 0$ for $ 0 \leq t \leq \tau_2$  except for $t = \tau_j, j = 1,2$.
 But as before, we know that in either case there is an $\epsilon > 0$ such that dim\,ker$\left(A_p - \lambda\,I_N\right)) = 0$
 for $0 < |\lambda - \lambda_2| < 0$ if the first case holds  and for $0 < |\lambda - \gamma(\tau_2)| < \epsilon$ if the second case holds.
 Carrying on in this manner we finally arrive at the situation where either there is a finite sequence
 $\{\,\tau_j\,\}_1^k, 0 < \tau_1< \cdots < \tau_k < 1$ such that $N(t) = 0$ for $0 \leq t <1$ except for $ t = \tau_j, j = 1,\ldots,k$,  or
 there is an infinite sequence $\{\,\tau_j\,\}_1^{\infty}, 0 < \tau_1  < \tau_2 < \cdots < 1$, such that $N(t) = 0$ for $0 \leq t  < \tau$
 except for $t = \tau_j, j \geq 1$,
 where $\tau = \lim_{j \to \infty} \tau_j$. Then we see from \cite{Ka} that the second case is not possible, while for the first case  there is an $\epsilon > 0$
  such that dim\,ker$\left(A_p - \lambda\,I_N\right) = 0$ for $0 < |\lambda - \lambda_2| < \epsilon$.
 Thus we have shown that if $\mu \in \mathcal{L}^{\dagger}$ and dim\,ker$\left(A_p - \mu\,I_N\right) > 0$, then there is an $\epsilon > 0$
 such that dim\,ker$(A_p - \lambda\,I_N) = 0$ for $0 < |\lambda - \mu| < \epsilon$.
 The final assertions of the theorem follows from this last result, and  \cite{Ka},  which concludes the
 proof.
 \end{proof}

   We are now going to investigate how the eigenvalues, if any, of $A_p$ which lie in $\mathcal{L}^{\dagger}$ vary with $p$.

 \begin{theorem} \label{T:4.2}
  Suppose that the spectral problem \eqref{E:1.1} is weakly smooth
  and
  parameter-elliptic in $\mathcal{L}$.
    Suppose in addition that $a^{jk}_{\alpha} \in C^{s_j+n}(\overline{\mathbb{R}^n})$
  for $|\alpha| \leq s_j+t_k, 1 \leq j,k \leq N$.
    Lastly suppose
   that
   $\lambda_1 \in \mathcal{L}^{\dagger}$ is an eigenvalue of $A_p$ and $u^{(1)}$ a corresponding eigenvector. Then $\lambda_1$
   is an eigenvalue and $u^{(1)} \in W^{(t)}_q(\mathbb{R}^n)$ a corresponding eigenvector of $A_q$ for every $q$ satisfying $p < q < \infty$.
   Consequently  ker$\,\left(A_p - \lambda_1\,I_N\right) \subset$   ker$\,\left(A_q - \lambda_1\,I_N\right)$.
 \end{theorem}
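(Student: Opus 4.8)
The plan is to bootstrap the regularity of the eigenvector $u^{(1)}$ from the $L_p$ setting into the $L_q$ setting by exploiting Sobolev embeddings together with the elliptic regularity result of Proposition~\ref{P:3.6}. Since $A_p u^{(1)} = \lambda_1 u^{(1)}$ and $u^{(1)} \in W^{(t)}_p(\mathbb{R}^n)$, the right-hand side $f := \lambda_1 u^{(1)}$ lies in $H^{(-s)}_p(\mathbb{R}^n)$, but in fact it has far more regularity: because $u^{(1)} \in W^{(t)}_p(\mathbb{R}^n)$ and $t_j > 0$, each component $u^{(1)}_j$ lies in $W^{t_j}_p(\mathbb{R}^n)$, hence $f_j = \lambda_1 u^{(1)}_j \in W^{t_j}_p(\mathbb{R}^n) \hookrightarrow H^{-s_j + \ell_0}_p(\mathbb{R}^n)$ provided $\ell_0 \le t_j + s_j = m_j$. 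Since $m_N \ge 1$ and $\ell_0$ can be chosen to be $1$, Proposition~\ref{P:3.6} (whose smoothness hypothesis $a^{jk}_\alpha \in C^{s_j + \ell_0}(\overline{\mathbb{R}^n})$ is guaranteed for $\ell_0 \le n$ by the present additional assumption) applies and yields $u^{(1)} \in W^{(t+1)}_p(\mathbb{R}^n)$. Iterating Proposition~\ref{P:3.6}, at each stage we gain one more derivative on $u^{(1)}$ as long as the source $f = \lambda_1 u^{(1)}$ has the matching regularity, which it does for as many steps as our smoothness of the coefficients permits; after $\ell_0 = n$ steps we obtain $u^{(1)} \in W^{(t+n)}_p(\mathbb{R}^n)$, so in particular $u^{(1)}_j \in W^{t_j + n}_p(\mathbb{R}^n)$ for every $j$.

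Next I would invoke the Sobolev embedding theorem. Given $p < q < \infty$, there is a finite chain of exponents $p = q_0 < q_1 < \cdots < q_M = q$ with each step small enough that $W^{m}_{p}(\mathbb{R}^n) \hookrightarrow W^{m - 1}_{q'}(\mathbb{R}^n)$ when $1/q' \ge 1/p - 1/n$; more to the point, since $u^{(1)}_j$ now lies in $W^{t_j + n}_p(\mathbb{R}^n)$ with $n$ extra derivatives available, repeated application of the embedding $W^{m+1}_{r}(\mathbb{R}^n) \hookrightarrow W^{m}_{r^{*}}(\mathbb{R}^n)$ (valid for $1/r^{*} \ge 1/r - 1/n$, i.e. trading one derivative for an increase of the Lebesgue exponent by the Sobolev amount) allows us to climb from exponent $p$ up to any prescribed $q < \infty$ while retaining at least $t_j$ derivatives. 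One must check that $n$ derivatives of margin suffice to reach an arbitrary finite $q$: each embedding step lowers $1/r$ by at most $1/n$, so after $n$ steps $1/q$ can be made as small as $1/p - 1 < 0$, which already covers all $q \in (p, \infty)$; hence $u^{(1)} \in W^{(t)}_q(\mathbb{R}^n)$.

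It then remains to observe that $u^{(1)} \in W^{(t)}_q(\mathbb{R}^n) = D(A_q)$ and that the distributional identity $A(x,D)u^{(1)} = \lambda_1 u^{(1)}$, which holds in $\mathbb{R}^n$, is exactly the statement $A_q u^{(1)} = \lambda_1 u^{(1)}$; thus $\lambda_1$ is an eigenvalue of $A_q$ with eigenvector $u^{(1)}$, and since every element of $\mathrm{ker}(A_p - \lambda_1 I_N)$ is treated identically, $\mathrm{ker}(A_p - \lambda_1 I_N) \subset \mathrm{ker}(A_q - \lambda_1 I_N)$. (One should also note $\lambda_1 \in \mathcal{L}^{\dagger}$, so $\lambda_1 \ne 0$ and all the cited propositions apply; and $\lambda_1$ being an eigenvalue of $A_p$ forces $|\lambda_1| < \lambda^0(p)$ by Theorem~\ref{T:4.1}, but this plays no role in the argument.)

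The main obstacle is the careful bookkeeping in the bootstrap: one must verify that the number of regularity steps available — limited by the coefficient smoothness $C^{s_j+n}(\overline{\mathbb{R}^n})$ and by the intrinsic regularity $m_j$ of the source term $\lambda_1 u^{(1)}$ — is genuinely enough to accommodate the loss incurred in the Sobolev chain from $p$ to an arbitrary $q < \infty$, and that the embeddings are applied componentwise with the correct shift $-s_j$ versus $t_j$ in the Douglis--Nirenberg weights. This is the reason the hypothesis demands $n$ extra derivatives of the coefficients: it is precisely the number of Sobolev embedding steps that can be needed in $\mathbb{R}^n$ to push the Lebesgue exponent past any finite value, while never dropping below the baseline order $t_j$ required for membership in $D(A_q)$.
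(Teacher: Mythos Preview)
Your proposal is correct and follows essentially the same route as the paper: invoke Proposition~\ref{P:3.6} (iterated, using that $f=\lambda_1 u^{(1)}$ gains regularity in lockstep with $u^{(1)}$) to obtain $u^{(1)}\in W^{(t+n)}_p(\mathbb{R}^n)$, and then apply the Sobolev embedding theorem to land in $W^{(t)}_q(\mathbb{R}^n)$ for every $q\in(p,\infty)$. The paper's version is simply more terse, citing Proposition~\ref{P:3.6} and \cite[Theorem~5.4, p.~97]{Ad} without spelling out the bootstrap or the embedding chain.
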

\begin{proof}
As a consequence of Proposition~\ref{P:3.6} we see that $u^{(1)}
\in W^{(t+n)}_p(\mathbb{R}^n)$. Hence it follows from the Sobolev
embedding theorem (see \cite[Theorem~5.4, p.~97]{Ad}) that
$u^{(1)} \in W^{(t)}_q(\mathbb{R}^n)$ for every $q$ satisfying $p
< q < \infty$, and all the assertions of the theorem are immediate
consequences of this fact.
\end{proof}

Under some further restrictions  Theorem~\ref{T:4.2} can be
improved.
\begin{theorem} \label{T:4.3}
  Suppose that the hypotheses of Theorem  \ref{T:4.2} hold with $s_j = 0$ and $t_j$ even for $j = 1,\ldots,N$.
      Suppose in addition that for $1 \leq j,k \leq N$,
    $a^{jk}_{\alpha} \in C^{|\alpha|+n}(\overline{\mathbb{R}^n}) \cap C^{|\alpha|+1,0}(\overline{\mathbb{R}^n})$
    for $|\alpha| \leq t_k$ with $a^{jk}_{\alpha}(x) = 0$ for $x \in \mathbb{R}^n$ if $j > k, t_j < t_k$, and $|\alpha| \geq t_j$ and also if
    $j < k, t_j > t_k$, and $|\alpha| = t_k$.
    Then the eigenvalues of $A_p$ lying in $\mathcal{L}^{\dagger}$,
    as well as their geometric and algebraic multiplicities, are the same for all these values of p.
    \end{theorem}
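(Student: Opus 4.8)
The plan is to complement the inclusion furnished by Theorem~\ref{T:4.2} with its reverse, obtained by passing to the formal adjoint and exploiting the fact (Theorem~\ref{T:4.1}) that $A_p - \lambda\,I_N$ has index zero on $\mathcal{L}^{\dagger}$. Write $A^{+}(x,D)$ for the formal adjoint of $A(x,D)$, so that its $(j,k)$ entry is the formal adjoint of $A_{kj}(x,D)$. Since $s_j = 0$ for all $j$, this entry has order at most $t_j$, and the prescribed vanishing of the top-order coefficients $a^{jk}_{\alpha}$ is precisely what forces every off-block entry of $A^{+}(x,D)$ to have strictly lower order; hence $A^{+}(x,D)$ is again a Douglis--Nirenberg system, now with weights $s^{+}_j = 0$, $t^{+}_j = t_j$ (still even and positive, so Assumption~\ref{A:2.1} holds), and with the same block decomposition $k_1,\ldots,k_d$ as $A(x,D)$. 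Its principal part is $\textrm{\r{A}}^{+}(x,\xi) = \overline{\textrm{\r{A}}(x,\xi)^{T}}$, which is block diagonal, so that $\mathcal{A}^{+,(r)}_{11}(x,\xi) = \overline{\bigl(\mathcal{A}^{(r)}_{11}(x,\xi)\bigr)^{T}}$ and therefore
\[
\bigl|\,\textrm{det}\,\bigl(\mathcal{A}^{+,(r)}_{11}(x,\xi) - \lambda\,\tilde{I}_{r,0}\bigr)\bigr| = \bigl|\,\textrm{det}\,\bigl(\mathcal{A}^{(r)}_{11}(x,\xi) - \overline{\lambda}\,\tilde{I}_{r,0}\bigr)\bigr| ,
\]
whence \eqref{E:2.1} for $A^{+}(x,D)$ in the conjugate sector $\overline{\mathcal{L}}$ follows, with the same constants, from \eqref{E:2.1} for $A(x,D)$ in $\mathcal{L}$. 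Finally, since the coefficients of $A^{+}(x,D)$ are built from derivatives of order at most $|\alpha|$ of the $a^{jk}_{\alpha}$, the hypothesis $a^{jk}_{\alpha} \in C^{|\alpha|+n}(\overline{\mathbb{R}^n})$ puts them in $C^{n}(\overline{\mathbb{R}^n}) = C^{s^{+}_j+n}(\overline{\mathbb{R}^n})$, while $a^{jk}_{\alpha} \in C^{|\alpha|+1,0}(\overline{\mathbb{R}^n})$ makes $A^{+}(x,D)$ weakly smooth. Thus $A^{+}(x,D)$ satisfies the hypotheses of Theorems~\ref{T:4.1} and \ref{T:4.2} relative to $\overline{\mathcal{L}}$; carrying out this verification carefully — in particular the transfer of the conditions \eqref{E:2.1} and of the required smoothness of the adjoint coefficients — is the step I expect to demand the most work.

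Granting it, for $1 < q < \infty$ let $A^{+}_q$ be the operator on $H^{(-s^{+})}_q(\mathbb{R}^n) = L_q(\mathbb{R}^n)^N$ induced by $A^{+}(x,D)$ with domain $W^{(t)}_q(\mathbb{R}^n)$. Using the parameter-free $L_q$--$L_{q^{\prime}}$ duality of the proof of Proposition~\ref{P:3.1} together with the density of $C^{\infty}_0(\mathbb{R}^n)^N$, the adjoint of $A_p - \lambda\,I_N \in L\bigl(W^{(t)}_p(\mathbb{R}^n), H^{(-s)}_p(\mathbb{R}^n)\bigr)$ is $A^{+}(x,D) - \overline{\lambda}\,I_N$ acting from $W^{(s)}_{p^{\prime}}(\mathbb{R}^n) = L_{p^{\prime}}(\mathbb{R}^n)^N$ into $H^{(-t)}_{p^{\prime}}(\mathbb{R}^n)$; and since $A^{+}(x,D)$ is elliptic, the localization device of Section~\ref{S:3} shows that every $L_{p^{\prime}}$ distributional solution of $\bigl(A^{+}(x,D) - \overline{\lambda_1}\bigr)w = 0$ already lies in $W^{(t)}_{p^{\prime}}(\mathbb{R}^n)$. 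Hence the kernel of $(A_p - \lambda_1 I_N)^{\prime}$ is, via this duality and elliptic regularity, identified with $\textrm{ker}\,\bigl(A^{+}_{p^{\prime}} - \overline{\lambda_1}I_N\bigr)$, and since $\textrm{index}\,(A_p - \lambda_1 I_N) = 0$ for $\lambda_1 \in \mathcal{L}^{\dagger}$,
\[
\dim\textrm{ker}\,(A_p - \lambda_1 I_N) = \dim\textrm{ker}\,\bigl((A_p - \lambda_1 I_N)^{\prime}\bigr) = \dim\textrm{ker}\,\bigl(A^{+}_{p^{\prime}} - \overline{\lambda_1}I_N\bigr) ,
\]
and the same identity holds with $p$ replaced by any other exponent in $(1,\infty)$ (the relevant threshold for $A^{+}(x,D)$ being again $p$-independent and, after enlarging $\lambda^{\dagger}$ if necessary, the same as for $A(x,D)$).

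Now the squeeze. Fix $1 < p < q < \infty$, so $q^{\prime} < p^{\prime}$, and let $\lambda_1 \in \mathcal{L}^{\dagger}$. Theorem~\ref{T:4.2} applied to $A(x,D)$ at $p < q$ gives $\textrm{ker}\,(A_p - \lambda_1 I_N) \subset \textrm{ker}\,(A_q - \lambda_1 I_N)$, hence $\dim\textrm{ker}\,(A_p - \lambda_1 I_N) \leq \dim\textrm{ker}\,(A_q - \lambda_1 I_N)$; Theorem~\ref{T:4.2} applied to $A^{+}(x,D)$ at $q^{\prime} < p^{\prime}$ gives $\dim\textrm{ker}\,\bigl(A^{+}_{q^{\prime}} - \overline{\lambda_1}I_N\bigr) \leq \dim\textrm{ker}\,\bigl(A^{+}_{p^{\prime}} - \overline{\lambda_1}I_N\bigr)$, which by the displayed identity reads $\dim\textrm{ker}\,(A_q - \lambda_1 I_N) \leq \dim\textrm{ker}\,(A_p - \lambda_1 I_N)$. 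Therefore $\dim\textrm{ker}\,(A_p - \lambda_1 I_N) = \dim\textrm{ker}\,(A_q - \lambda_1 I_N)$, and combined with the inclusion this forces $\textrm{ker}\,(A_p - \lambda_1 I_N) = \textrm{ker}\,(A_q - \lambda_1 I_N)$. As $p,q$ were arbitrary, the eigenvalues of $A_p$ lying in $\mathcal{L}^{\dagger}$ and their geometric multiplicities do not depend on $p$.

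For the algebraic multiplicities I would run the analogous squeeze on the Riesz projection $P_p = (2\pi i)^{-1}\oint_{|\zeta - \lambda_1| = \epsilon}(\zeta\,I_N - A_p)^{-1}\,d\zeta$, which for small $\epsilon > 0$ is well defined by Theorem~\ref{T:4.1} and whose finite-dimensional range is the generalized eigenspace of $A_p$ at $\lambda_1$. Iterating the regularity statement of Proposition~\ref{P:3.6} along a Jordan chain and then invoking the embedding $W^{(t+n)}_p(\mathbb{R}^n) \hookrightarrow W^{(t)}_q(\mathbb{R}^n)$ (see \cite[Theorem~5.4, p.~97]{Ad}) shows $\textrm{range}\,P_p \subset \textrm{range}\,P_q$ for $p < q$, so $\dim\textrm{range}\,P_p \leq \dim\textrm{range}\,P_q$; on the other hand $(P_p)^{\prime}$ is the Riesz projection of the adjoint $(A_p)^{\prime}$, whose generalized eigenspace is (by elliptic regularity) that of $A^{+}_{p^{\prime}}$ at $\overline{\lambda_1}$, so $\dim\textrm{range}\,P_p = \dim\textrm{range}\,(P_p)^{\prime}$, and applying the previous inequality to $A^{+}(x,D)$ at $q^{\prime} < p^{\prime}$ produces the reverse bound. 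Hence the algebraic multiplicities agree for all $p$ as well, which completes the proof.
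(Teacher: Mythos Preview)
Your argument is correct and, for the geometric multiplicities, takes a genuinely different route from the paper. The paper combines Theorem~\ref{T:4.2} (the ``easy'' inclusion $\ker(A_p-\lambda_1 I_N)\subset\ker(A_q-\lambda_1 I_N)$ for $p<q$, via Proposition~\ref{P:3.6} and Sobolev embedding) with Proposition~\ref{P:4.4}, a Rabier-style weighted perturbation argument using the semigroups $T_p(\tau)u=e^{-\tau\langle x\rangle}u$ and stability of the Fredholm index to obtain the reverse inclusion for $q<p$ directly as a set inclusion. You bypass Proposition~\ref{P:4.4} altogether: you apply Theorem~\ref{T:4.2} to both $A$ and the formal adjoint $A^{+}$, and close the loop with the index-zero identity $\dim\ker(A_p-\lambda_1 I_N)=\dim\ker(A^{+}_{p'}-\overline{\lambda_1}I_N)$, squeezing the dimensions via $p<q\iff q'<p'$. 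This is more economical---it uses only the ingredients already in Theorems~\ref{T:4.1} and \ref{T:4.2}---at the cost of first obtaining only dimension equality and then upgrading to set equality through the inclusion. The paper's route, by contrast, gives the two set inclusions independently, which is conceptually a bit stronger but relies on the additional perturbation machinery of Proposition~\ref{P:4.4}.

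One small point: your identification of $\ker\bigl((A_p-\lambda_1 I_N)'\bigr)$ with $\ker\bigl(A^{+}_{p'}-\overline{\lambda_1}I_N\bigr)$ appeals to an $L_{p'}\to W^{(t)}_{p'}$ elliptic regularity statement that is standard but not proved in the paper. You do not actually need it: the easy inclusion $\ker(A^{+}_{p'}-\overline{\lambda_1}I_N)\subset\ker\bigl((A_p-\lambda_1 I_N)'\bigr)$ together with index zero for both $A_p$ and $A^{+}_{p'}$ (applied symmetrically, using $(A^{+})^{+}=A$) already forces the dimensions to coincide, and that is all the squeeze requires. For the algebraic multiplicities your argument via Jordan chains (equivalently, Riesz projections), Proposition~\ref{P:3.6}, Sobolev embedding, and the adjoint is essentially the same as the paper's.
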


     In order to prove the theorem, a preliminary result is required.
 \begin{proposition} \label{P:4.4}
 Suppose that the hypotheses of Theorem \ref{T:4.3} hold and let $1  < q < p$. Also let $\lambda_1 \in \mathcal{L}^{\dagger}$ and put
 $n_p = \textrm{dim\,ker}\,(A_p - \lambda_1\,I_N), d_p =
 \textrm{dim\,coker}\,(A_p - \lambda_1\,I_N)$. Then \\
 \indent(1) ker\,$(A_p - \lambda_1\,I_N) \subset$ ker\,$(A_q - \lambda_1\,I_N)$ if $0 \leq d_p \leq n_p < \infty$; \\
 \indent (2) ker\,$(A_p - \lambda_1\,I_N) \subset$ ker\,$(A_q - \lambda_1\,I_N)$ if $0 \leq n_p < d_p < \infty$.
 \end{proposition}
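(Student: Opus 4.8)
The plan is to deduce both (1) and (2) from the single stronger statement $\ker(A_p-\lambda_1 I_N)=\ker(A_q-\lambda_1 I_N)$, which I would obtain by combining Theorem~\ref{T:4.2}, applied both to $A(x,D)$ and to its formal adjoint, with Fredholm duality and the fact, part of Theorem~\ref{T:4.1}, that the index of $A_r-\lambda_1 I_N$ is zero for every $r\in(1,\infty)$ when $\lambda_1\in\mathcal{L}^{\dagger}$. In particular the argument is insensitive to the relative sizes of $n_p$ and $d_p$, so the two cases are subsumed simultaneously.

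Since $1<q<p$, Theorem~\ref{T:4.2} applied with smaller exponent $q$ and larger exponent $p$ gives immediately $\ker(A_q-\lambda_1 I_N)\subseteq\ker(A_p-\lambda_1 I_N)$, hence $n_q\le n_p$. To treat the cokernel I would pass to the formal adjoint $\widehat A(x,D)=A(x,D)^{*}$. The hypotheses peculiar to Theorem~\ref{T:4.3} --- $s_j=0$, $t_j$ even, the extra smoothness $a^{jk}_{\alpha}\in C^{|\alpha|+n}(\overline{\mathbb{R}^n})\cap C^{|\alpha|+1,0}(\overline{\mathbb{R}^n})$ for $|\alpha|\le t_k$, and the vanishing of the $a^{jk}_{\alpha}$ in the indicated off-diagonal ranges --- are precisely what is needed to make $\widehat A(x,D)$ a Douglis--Nirenberg system with the same indices (now $\widehat s_j=0$, $\widehat t_j=t_j$) whose principal symbol equals $\mathring{{A}}(x,\xi)^{*}$: the vanishing conditions force $\mathring{{A}}_{jk}=0$ exactly where, without them, transposition would spoil the order bookkeeping. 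Hence, for each $r$, $\bigl|\det\bigl(\mathcal{A}^{(r)}_{11}(x,\xi)^{*}-\bar\lambda\,\tilde{I}_{r,0}\bigr)\bigr|=\bigl|\det\bigl(\mathcal{A}^{(r)}_{11}(x,\xi)-\lambda\,\tilde{I}_{r,0}\bigr)\bigr|$, so $\widehat A$ is parameter-elliptic in the conjugate sector $\overline{\mathcal{L}}$; and the required smoothness of $\widehat A$ follows because the coefficient of order $\alpha$ of $\widehat A_{jk}$ is a linear combination of derivatives $D^{\beta}a^{jk}_{\gamma}$ with $|\beta|=|\gamma|-|\alpha|$, which by hypothesis lie in $C^{|\alpha|+n}(\overline{\mathbb{R}^n})\cap C^{|\alpha|+1,0}(\overline{\mathbb{R}^n})$. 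Thus Theorems~\ref{T:4.1} and \ref{T:4.2} apply to $\widehat A$ as well (after, if necessary, enlarging $\lambda^{\dagger}$ so that it also dominates the parameter-ellipticity threshold for $\widehat A$, which does not affect the $p$-independence of $\lambda^{\dagger}$).

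Next, since $A_p-\lambda_1 I_N$ is closed, densely defined and Fredholm and $H^{(-s)}_p(\mathbb{R}^n)=L_p(\mathbb{R}^n)^N$, the closed range theorem identifies its cokernel with the annihilator in $L_{p'}(\mathbb{R}^n)^N$ of the range, that is, with the set of $v\in L_{p'}(\mathbb{R}^n)^N$ solving $\widehat A(x,D)v=\bar\lambda_1 v$ distributionally; by elliptic regularity for $\widehat A$ every such $v$ belongs to $W^{(t)}_{p'}(\mathbb{R}^n)$, so $d_p=\dim\ker(\widehat A_{p'}-\bar\lambda_1 I_N)$, where $\widehat A_{p'}$ denotes the operator induced on $L_{p'}(\mathbb{R}^n)^N$ by $\widehat A$. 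Because $q<p$ is equivalent to $p'<q'$, Theorem~\ref{T:4.2} applied to $\widehat A$ (smaller exponent $p'$, larger exponent $q'$, the case $d_p=0$ being trivial) gives $d_p\le d_q$. Finally, Theorem~\ref{T:4.1} gives that the index of $A_r-\lambda_1 I_N$ is $0$ for $r=p$ and $r=q$, so $n_p-d_p=n_q-d_q$; together with $d_p\le d_q$ this forces $n_p\le n_q$, and, combined with $\ker(A_q-\lambda_1 I_N)\subseteq\ker(A_p-\lambda_1 I_N)$ and the finiteness of the dimensions, we conclude $\ker(A_p-\lambda_1 I_N)=\ker(A_q-\lambda_1 I_N)$, covering both (1) and (2).

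I expect the main obstacle to lie in the structural step of the second paragraph: carefully verifying that $A(x,D)^{*}$ meets all the hypotheses of Theorem~\ref{T:4.3} relative to $\overline{\mathcal{L}}$ --- which is the real purpose of the otherwise mysterious coefficient-vanishing conditions and the step-up in smoothness imposed there --- together with the accompanying regularity assertion that $L_{p'}$-distributional solutions of the adjoint equation automatically belong to $W^{(t)}_{p'}(\mathbb{R}^n)$, on which the identification $d_p=\dim\ker(\widehat A_{p'}-\bar\lambda_1 I_N)$ depends.
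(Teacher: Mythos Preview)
Your argument is correct and in fact yields the stronger conclusion $\ker(A_p-\lambda_1 I_N)=\ker(A_q-\lambda_1 I_N)$, but it follows a genuinely different route from the paper's own proof of Proposition~\ref{P:4.4}. The paper introduces the exponential weight operators $T_p(\tau)u=e^{-\tau\langle x\rangle}u$ on $D(A_p)$ and $S_p(\tau)v=e^{-\tau\langle x\rangle}v$ on $L_p(\mathbb{R}^n)^N$, shows that $S(\tau)^{-1}(A_p-\lambda_1 I_N)T(\tau)$ converges in norm to $A_p-\lambda_1 I_N$ as $\tau\to 0$, and then invokes Fredholm stability (Kato) to conclude that for some small $\tau_0>0$ the conjugated operator has kernel of the same dimension $n_p$; since $T(\tau_0)$ maps that kernel bijectively onto $\ker(A_p-\lambda_1 I_N)$, every eigenfunction carries an extra factor $e^{-\tau_0\langle x\rangle}$ and hence, by H\"older's inequality, lies in $W^{(t)}_q(\mathbb{R}^n)$ for all $q<p$. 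This is the method of Rabier~\cite{R}, and the case $d_p>0$ is handled by the same weight mechanism.

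Your route instead combines Theorem~\ref{T:4.2}, its analogue for the formal adjoint, and the index-zero conclusion of Theorem~\ref{T:4.1}; this is precisely the mechanism the paper deploys \emph{later}, in the proof of Theorem~\ref{T:4.3}, to pass from Proposition~\ref{P:4.4} to the full kernel/cokernel equality, so in effect you have absorbed Proposition~\ref{P:4.4} into that subsequent argument and rendered the weight construction unnecessary. The trade-off is that the paper's weight method yields the extra information that eigenfunctions decay exponentially and does not depend on the adjoint system satisfying all the structural hypotheses, nor on the regularity step identifying $d_p$ with $\dim\ker(\widehat A_{p'}-\bar\lambda_1 I_N)$; your method is shorter once the adjoint structure is in hand, and delivers the two-sided kernel equality at once rather than only the inclusion stated in the proposition.
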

\begin{proof}
 To begin with let us introduce the multiplication operators $T_p(\tau)$ and $S_p(\tau)$, $\tau \geq 0$, acting on $D(A_p)$ and $L_p(\mathbb{R}^n)^N$
($= H^{(-s)}_p(\mathbb{R}^n)$), respectively,
 where $T_p(\tau)u(x) = e^{-\tau\,\langle\,x\,\rangle}u(x)$ for $ u \in D(A_p)$, $S_p(\tau)v(x) = e^{-\tau\,\langle\,x\,\rangle}v(x)$ for $v \in L_p(\mathbb{R}^n)^N$,
 where
 $\langle\,x\,\rangle = (1+|x|^2)^{1/2}$.

 Let $ u \in D(A_p)$, denote by $u_k$ its $k$-th component, and let $\alpha$ denote a multi-index such that $|\alpha| \leq t_k$.
 Then it follows from the Leibnitz formula that
 \[
  D^{\alpha}e^{\pm\tau\,\langle\,x\,\rangle}u_k(x) = e^{\pm\tau\,\langle\,x\,\rangle}D^{\alpha}u_k(x) + \sum_{ \substack{ \beta \leq \alpha\\
  |\beta| > 0}}\binom{\alpha}{\beta}\left(D^{\beta}e^{\pm\tau\,\langle\,x\,\rangle)}\right)D^{\alpha - \beta}u_k(x),
  \]
while direct calculations show that
 \begin{equation*}
 D^{\beta}e^{\pm\tau\,\langle\,x\,\rangle} = e^{\pm\tau\langle\,x\,\rangle}\sum\,^{\prime}\prod\,^{\prime}
 \left(\pm\tau\,D^{\gamma_j}\langle\,x\,\rangle\right)
  \end{equation*}
and that
 \begin{equation*}
 |D^{\gamma_j}\langle\,x\,\rangle| \leq  C(\gamma_j)\langle\,x\,\rangle^{1-|\gamma_j|},
 \end{equation*}
 where $\prod^{\prime}$ indicates that the product is taken over a set of distinct multi-indices $\gamma_j$ for which $|\gamma_j| > 0$ and
 $\sum_{j}|\gamma_j| = |\beta|$,
 $\sum^{\prime}$ indicates that the sum is taken over all such sets, and $C(\gamma_j)$ denotes a constant depending upon $\gamma_j$.
 In light of these facts it is not difficult to deduce that for $\tau \in [0,\infty)$, $T_p(\tau) \in L\left(D(A_p),D(A_p)\right)$ and
 $S(\tau) \in L\left(L_p(\mathbb{R}^n)^N,L_p(\mathbb{R}^n)^N\right)$
  are $C_0$ semigroups such that for each $\tau, T(\tau)$ and $S(\tau)$ are injective,while $S(\tau)^{-1}A_pT(\tau) \in L\left(D(A_p),L_p(\mathbb{R}^n)^N\right)$ and that
  $S(\tau)^{-1}A_pT(\tau)$ converges in norm to $A_p$ as $\tau \to 0$.

  Let us next fix our attention upon the assertions of the proposition, and to begin with let us consider assertion (1) with $n_p > 0$ and $d_p = 0$.
  Then it follows from \cite[Theorem~5.22, p.~236]{Ka} that for some $\tau_0 > 0, V = S(\tau_0)^{-1}(A_p - \lambda_1\,I_N)T(\tau_0)$ is semi-Fredholm
  with dim\,ker\,$V = n_p$, and dim\,coker\,$V = 0$. Hence by arguing as in \cite{R} and applying Holder's inequality,  we readly deduce that
  ker$(A_p - \lambda_1\,I_N)  \subset$ ker$(A_q- \lambda_1\,I_N)$. Obviously the same result is true if we assume instead that $n_p = 0$.


  Finally assertions (1), with $d_p > 0$, as well as assertion (2), can be proved by modifying the above arguments as in \cite{R}.
\end{proof}

\noindent\textit{Proof of Theorem 4.3.} Referring to
Proposition~\ref{P:4.4} for notation,
 let us firstly fix our attention upon the case  $1 < q < p$ and prove all but the final assertions
  of the theorem for this case. Accordingly, to begin with, let us observe
  from Theorem~\ref{T:4.2} and Proposition~\ref{P:4.4} that ker$(A_q - \lambda_1\,I_N) =$ ker$(A_p - \lambda_1\,I_N)$.

We now turn our attention to cokernels. Then it is not difficult to show
that under our assumptions
 the spectral problem formally adjoint to the spectral problem \eqref{E:1.1},
\begin{equation} \label{E:4.1}
A^{\prime}(x,D)u(x) - \overline{\lambda_1}u(x) = f(x) \quad
\textrm{for} \quad x \in \mathbb{R}^n,
\end{equation}
is well defined. Here $A^{\prime}(x,D) =
\left(A^{\prime}_{jk}(x,D\right)_{j,k=1}^N$ is the formal adjoint
of $A(x,D)$, the $A^{\prime}_{jk}(x,D)$ are linear differential
operators defined on $\mathbb{R}^n$ of order not exceeding
$s^{\prime}_j + t^{\prime}_k$ where $s^{\prime}_j = 0,
t^{\prime}_k = t_k$ for $1 \leq j,k \leq N$. Note also that the
top order operators $\textrm{\r{A}}(x,D)$ and
$\textrm{\r{A}}^{\prime}(x,D)$ are block diagonal. Then it is  not
difficult to verify that the analogues of
Propositions~\ref{P:3.1}--\ref{P:3.3} for the spectral problem
\eqref{E:4.1} also hold and we can take the constant
$\lambda^{\sharp}$ of Proposition~\ref{P:3.1} to remain the same.
We henceforth let $A^{\prime}_{p^{\prime}}$ denote the operator on
$L_{p^{\prime}}(\mathbb{R}^n)^N$ that acts like $A^{\prime}(x,D)$
and has domain $D(A^{\prime}_{p^{\prime}}) =
W^{(t)}_{p^{\prime}}(\mathbb{R}^n)$; and we can readily verify
that the analogues of Theorems~\ref{T:4.1}, \ref{T:4.2} and
Proposition~\ref{P:4.4} hold for $A^{\prime}_{p^{\prime}}$. Hence
we can now argue as above with $A^{\prime}_{q^{\prime}} -
\overline{\lambda_1}\,I_N$ and $A^{\prime}_{p^{\prime}} -
\overline{\lambda_1}\,I_N$ in place of $A_p - \lambda_1\,I_N$ and
$A_q - \lambda_1\,I_N$, respectively, to deduce that coker\,$(A_q
- \lambda_1\,I_N)$ = coker\,$(A_p - \lambda_1\,I_N)$. This proves
all but the final assertions of the theorem for the case $q < p$,
and the analogous result for the case $q > p$ can be proved in a
similar fashion (i.e., by arguing with the adjoint operators).

Turning to the final assertions of the theorem, let $\lambda_1 \in \mathcal{L}^{\dagger}$ be an eigenvalue
of $A_p$ and $u^{(1)}$
a corresponding eigenvector. Then it is clear from what has already been proved that we need only prove that
the algebraic multiplicity
 of $\lambda_1$ is the same for all $p$. Accordingly, let us suppose firstly that $q > p$ and
 let $\bigl\{\,u^{(1,j)}\,\bigr\}_{j=0}^{m-1}$
 be a chain of length $m$ consisting of the eigenvector $u^{(1,0)} = u^{(1)}$ and the associated vectors
 $\bigl\{\,u^{(1j)}\,\bigr\}_{j=1}^{m-1}$ corresponding to the eigenvalue $\lambda_1$ of $A_p$
 (see \cite[pp.~60--61]{M}).
 Thus it follows from Proposition~\ref{P:3.6} and the Sobolev embedding theorem
(see \cite[Theorem~5.4, p.~97]{Ad}) that
$\bigl\{\,u^{(1,j)}\,\bigr\}_{j=0}^{m-1}$ is a chain of length $m$
consisting of the eigenvector $u^{(1)} = u^{(1,0)}$ and the
associated vectors $\bigl\{\,u^{(1,j)}\,\bigr\}_{j=1}^{m-1}$
corresponding to the eigenvalue $\lambda_1$ of $A_q$. We conclude
immediately that the algebraic multiplicity of $\lambda_1$ as an
eigenvalue of $A_p$ does not exceed the algebraic multiplicity of
$\lambda_1$ as an eigenvalue of $A_q$. On the other hand we can
appeal to the analogue of Proposition~\ref{P:3.6} for the spectral
problem \eqref{E:4.1} to show that the algebraic multiplicity of
$\overline{\lambda_1}$ as an eigenvalue of
$A^{\prime}_{q^{\prime}}$ does not exceed the algebraic
multiplicity of $\overline{\lambda_1}$ as an eigenvalue of
$A^{\prime}_{p^{\prime}}$. Hence we conclude that the algebraic
multiplicity of $\lambda_1$ as an eigenvalue of $A_p$ and of $A_q$
are the same (see \cite[p.~184]{Ka}). Since similar arguments give
the same result for $q < p$, the proof of the theorem is complete.
\hfill{$\square$}

 \section{An example} \label{S:5}
 In this section we fix our attention upon the spectral problem \eqref{E:1.1} with $A(x,D)$ a $2 \times 2$ matrix operator
 whose entries $A_{jk}(x,D)$ are linear differential operators defined on $\mathbb{R}^n$ of order not exceeding $s_j + t_k$, where $s_j = 0$ for $j = 1,2$,
 and $t_1 = 4, t_2 = 2$. To be more precise we now take
 \begin{equation} \label{E:5.1}
  A(x,D) = A_0(x,D) - cI_2 + \left(\tilde{A}_{jk}(x,D)\right)^2_{j,k=1},
 \end{equation}
where $A_0(x,D)$ = diag\,$\left(\Delta^2,-\Delta\right)$, $\Delta$ denotes the Laplacian in $\mathbb{R}^n, c \in \overline{\mathbb{R}_+}$ denotes a constant,
$\tilde{A}_{jk}(x,D) = \sum_{|\alpha| \leq \sigma_{jk}}a^{jk}_{\alpha}(x)D^{\alpha}$ with $\sigma_{jk} = 1$ if $j \ne k, \sigma_{11} = 3$,  $\sigma_{22} = 1$,
and the $a^{jk}_{\alpha} \in C_0^{\infty}(\mathbb{R}^n)$.
 Then with $A(x,D)$ given by \eqref{E:5.1}, let us now investigate the spectral theory connected with the problem \eqref{E:1.1}.

 Accordingly, let us fix an $\epsilon$ satisfying $0 < \epsilon < \pi/2$ and let $\mathcal{L}$ denote the sector in the complex plane with vertex at the origin
 determined by the inequalities $\epsilon \leq  \arg\,\lambda \leq 2\pi-\epsilon$. Then we can readily verify that the hypotheses of Theorem \ref{T:4.3}
 concerning the spectral problem \eqref{E:1.1} are satisfied, and hence if we let $A_p$ denote the operator on $L_p(\mathbb{R}^n)^2$
 that acts like $A(x,D)$ and has domain $D(A_p) = W^{(t)}_p(\mathbb{R}^n)$, the all the assertions
 of Theorems~\ref{T:4.1} and \ref{T:4.3} hold.

 In order to derive more information concerning the spectral properties of $A_p$, we are now going to fix our attention upon the case $p = 2$,
 and let $A^{(1)}_2$ (resp. $A^{(2)}_2$) denote the operator on $L_2(\mathbb{R}^n)$ that acts like $\Delta^2$
 (resp. $-\Delta$) and with domain
 $D(A^{(1)}_2) = W^4_2(\mathbb{R}^n)$ (resp. $D(A^{(2)}_2) = W^2_2(\mathbb{R}^n)$.
 Then direct calculations show that $A^{(1)}_2$ is a symmetric ope\-ra\-tor on
 $L_2(\mathbb{R}^n)$ whose numerical range is contained in the interval $[0,\infty)$.
 Furthermore, since it is shown in \cite{F} that the analogue of Theorem \ref{P:3.3}
 holds when the spectral problem \eqref{E:1.1} is replaced by a scalar spectral problem,
 we conclude from these results that
 $A^{(1)}_2$ is a selfadjoint operator on $L_2(\mathbb{R}^n)$ whose spectrum is contained in the interval $[0,\infty)$.
 Similarly we can show that
 $A^{(2)}_2$ is a selfadjoint operator on $L_2(\mathbb{R}^n)$
 whose spectrum is contained in $[0,\infty)$, and furthermore, we  know from \cite[p.~416]{B} (see also \cite[p.~158]{Ti}
 for the case $n=2$)
 that its spectrum is precisely $[0,\infty)$.

 Thus we have shown that if we let $A_{0,2}$ = diag\,$(A^{(1)}_2,A^{(2)}_2\,)$, then $A_{0,2} - c\,I_2$ is a selfadjoint
 operator on $L_2(\mathbb{R}^n)^2$
 with domain $D(A_{0,2}) =
 W^{(t)}_2(\mathbb{R}^n)$ and whose spectrum is precisely $[-c,\infty)$. Furthermore, we can appeal
 to Proposition~\ref{P:3.2}
  to show that $A_2 -(A_{0,2} - c\,I_2)$ is relatively compact with respect to
 $A_{0,2} - c\,I_2$, and hence it follows from \cite[Theorem~5.35, p.~244]{Ka} that $A_2$ is a closed
 operator on $L_2(\mathbb{R}^n)^2$
 with essential spectrum $[-c,\infty)$ and with semi-Fredholm domain $\mathbb{C} \setminus [-c,\infty)$.
 Hence referring again to Theorem~\ref{T:4.1},
 it is clear that we must have $\lambda^{\dagger} > c$, while it also follows from the arguments used in the
 proof of the final assertion of that theorem
 that the  Fredholm domain of $A_2$  is  precisely $\mathcal{C} \setminus[-c,\infty)$, that $\mathcal{C}\setminus[-c,\infty)$ consists of only one component,
 and that index\,$(A_2 - \lambda\,I_2) = 0$ \,for \,$\lambda$ lying in this component.




\begin{thebibliography}{100000}

\bibitem{Ad}
Robert~A. Adams, \emph{Sobolev spaces}, Academic Press, New
York-London, 1975.

\bibitem{Ag}
S.~Agmon, \emph{The {$L\sb{p}$} approach to the {D}irichlet
problem. {I}.
  {R}egularity theorems}, Ann. Scuola Norm. Sup. Pisa (3) \textbf{13} (1959),
  405--448.

\bibitem{ADN}
S.~Agmon, A.~Douglis, and L.~Nirenberg, \emph{Estimates near the
boundary for
  solutions of elliptic partial differential equations satisfying general
  boundary conditions. {II}}, Comm. Pure Appl. Math. \textbf{17} (1964),
  35--92.

\bibitem{ADF}
M.~Agranovich, R.~Denk, and M.~Faierman, \emph{Weakly smooth
nonselfadjoint
  spectral elliptic boundary problems}, Math. Top. \textbf{14} (1997),
  138--199.

\bibitem{AV}
M.~S. Agranovich and M.~I. Vishik, \emph{Elliptic problems with a
parameter and
  parabolic problems of general type}, Russ. Math. Surv. \textbf{19} (1964),
  no.~3, 53--157.

\bibitem{B}
Ju.~M. Berezanskii, \emph{Expansions in eigenfunctions of
selfadjoint
  operators}, Translations of Mathematical Monographs, Vol. 17, American
  Mathematical Society, Providence, R.I., 1968.

\bibitem{CP}
Jacques Chazarain and Alain Piriou, \emph{Introduction to the
theory of linear
  partial differential equations}, North-Holland Publishing Co., Amsterdam-New
  York, 1982.

\bibitem{DF}
R.~Denk and M.~Faierman, \emph{Estimates for solutions of a
parameter-elliptic
  multi-order system of differential equations}, Integr. Equ. Oper. Theory
  \textbf{66} (2010), no.~3, 327--365.

\bibitem{DMV}
Robert Denk, Reinhard Mennicken, and Leonid Volevich, \emph{The
{N}ewton
  polygon and elliptic problems with parameter}, Math. Nachr. \textbf{192}
  (1998), 125--157.

\bibitem{F}
M.~Faierman, \emph{Elliptic problems for a {D}ouglis--{N}irenberg
system over
  {$\Bbb{R}\sp n$} and over an exterior subregion}, Proc. Roy. Soc. Edinburgh
  Sect. A \textbf{146} (2016), no.~3, 579--594.

\bibitem{GK}
Gerd Grubb and Niels~J{\o}rgen Kokholm, \emph{A global calculus of
  parameter-dependent pseudodifferential boundary problems in {$L\sb p$}
  {S}obolev spaces}, Acta Math. \textbf{171} (1993), no.~2, 165--229.

\bibitem{Ka}
Tosio Kato, \emph{Perturbation theory for linear operators},
second ed.,
  Springer-Verlag, Berlin-New York, 1976.

\bibitem{Ko}
A~N Kozhevnikov, \emph{Spectral problems for pseudodifferential
systems
  elliptic in the douglis-nirenberg sense, and their applications}, Math. USSR
  Sb. \textbf{21} (1973), no.~1, 63--90.

\bibitem{LM}
J.-L. Lions and E.~Magenes, \emph{Non-homogeneous boundary value
problems and
  applications. {V}ol.~{I}}, Springer-Verlag, New York-Heidelberg, 1972.

\bibitem{LMc}
Robert~B. Lockhart and Robert~C. McOwen, \emph{On elliptic systems
in {${\bf
  R}\sp{n}$}}, Acta Math. \textbf{150} (1983), no.~1-2, 125--135.

\bibitem{M}
A.~S. Markus, \emph{Introduction to the spectral theory of
polynomial operator
  pencils}, Translations of Mathematical Monographs, vol.~71, American
  Mathematical Society, Providence, RI, 1988.

\bibitem{MM}
Vladimir~A. Mikhailets and Aleksandr~A. Murach, \emph{H\"ormander
spaces,
  interpolation, and elliptic problems}, De Gruyter Studies in Mathematics,
  vol.~60, De Gruyter, Berlin, 2014.

\bibitem{Mu}
A.~A. Murach, \emph{On elliptic systems in {H}\"ormander spaces},
Ukrainian
  Math.~J. \textbf{61} (2009), no.~3, 467--477.

\bibitem{R}
Patrick~J. Rabier, \emph{Fredholm and regularity theory of
  {D}ouglis-{N}irenberg elliptic systems on {$\Bbb R\sp N$}}, Math.~Z.
  \textbf{270} (2012), no.~1-2, 369--393.

\bibitem{Ti}
E.~C. Titchmarsh, \emph{Eigenfunction expansions associated with
second-order
  differential equations. {V}ol.~2}, Oxford, at the Clarendon Press, 1958.

\bibitem{T}
Hans Triebel, \emph{Interpolation theory, function spaces,
differential
  operators}, vol.~18, North-Holland Publishing Co., Amsterdam-New York, 1978.

\bibitem{ZM}
T.~N. Zinchenko and A.~A. Murach, \emph{Douglis-{N}irenberg
elliptic systems in
  {H}\"ormander spaces}, Ukrainian Math.~J. \textbf{64} (2013), no.~11,
  1672--1687.

\end{thebibliography}
\end{document}